\numberwithin{equation}{section}
\newtheorem{proposition}{Proposition}[section]
\newtheorem{corollary}{Corollary}[section]
\newtheorem{question}{Question}[section]
\newtheorem{theorem}{Theorem}[section]
\newtheorem{lemma}[theorem]{Lemma}
\newtheorem{definition}[theorem]{Definition}
\newtheorem{conjecture}[theorem]{Conjecture}
\begin{document}
	
	\title[On  additive complement  with special structures]{On  additive complement  with special structures}
	
	
	\author[Mohan]{Mohan}
	\address{Department of Mathematics, Indian Institute of Technology Roorkee, Uttarakhand, 247667, India}
	\email{mohan98math@gmail.com}


	\author[B R Patil]{Bhuwanesh Rao Patil }
	\address{Department of Mathematics, Indian Institute of Technology Roorkee, Uttarakhand, 247667, India}
	\email{bhuwanesh1989@gmail.com}

	\author[R K Pandey]{Ram Krishna Pandey$^{\ast}$}
	\address{Department of Mathematics, Indian Institute of Technology Roorkee, Uttarakhand, 247667, India}
	\email{ram.pandey@ma.iitr.ac.in}
	\thanks{$^{\ast}$The corresponding author}

	\subjclass[2020]{11A07, 11B05,  11B13, 11B25, 11B83}
	
	
	
	\keywords{Additive complement, asymptotic density, arithmetic progression, geometric progression.}

		\begin{abstract}
		Let $A$ be a set of natural numbers.	A set $B$, a set of natural numbers, is said to be an additive complement of the set $A$ if all sufficiently large natural numbers can be represented  in the form $x+y$, where $x\in A$ and $y\in B$.  This article describes various types of additive complements of the set $A$ such as those additive complement of $A$  that does not intersects  $A$, additive complements of the form of the union of disjoint infinite arithmetic progressions, additive complement having various density etc. As an application of this study, we also focus on the structure of sumset of arithmetic progression and geometric progression. Apart from this,  for given positive real no. $\alpha \leq 1$ and finite set $A$, we investigate  a set $B$ such that it can be written as union of disjoint infinite arithmetic progression and  density of $A+B$ is $\alpha$.
		
	\end{abstract}
	\maketitle

	\section{Notations}
		Let $\mathbb{N}$, $\mathbb{N}_0$, $\mathbb{Z}$, $\mathbb{Q}$, $\mathbb{R}$ and $\mathbb{P}$ denote the set of all natural numbers, the set of all nonnegative integers, the set of all integers, the set of all rational numbers, the set of all real numbers and the set of all prime numbers, respectively.	For the set $A$, $|A|$ denotes the cardinality of $A$. For positive integers $a$ and $b$, $\gcd(a,b)$ denotes the greatest common divisor of $a$ and $b$.	For given real numbers $a,b$ and a subset $X\subset \mathbb{R}$, we denote
		$$(a,b)_{X}:=\{x\in X: a<x<b \}, \quad
		(a,b]_{X}:=\{x\in X: a<x\leq b \},$$
		$$[a,b)_{X}:=\{x\in X: a\leq x<b \}, \quad
		[a,b]_{X}:=\{x\in X: a\leq x\leq b \},$$
		$$\text{and} \quad [a,b]:=[a,b]_{\mathbb{N}}.$$
		For given  subsets $A$ and $B$ of $\mathbb{N}$ and an integer $u$, we define $$A+B=\{x+y:x\in A,y\in B\}, \quad \text{} \quad A-B=\{x-y:x\in A, y\in B \},$$
		$$u+B=\{u+y:y\in B\}, \quad  u-B=\{u-y:y\in B\}, \quad \text{and} \quad uB=\{uy: y\in B \}.$$

	\section{Introduction}

	Representing  a natural number as a sum of two natural numbers belonging to some prescribed sets is an interesting problem among many mathematicians. In precise, 
	for given subsets $A$ and $B$ of  $\mathbb{N}$, can we write all sufficiently large natural numbers in the form of $x+y$ where $x\in A$ and $y\in B$? For example, all sufficiently large natural numbers can be written as sum of a prime number and a composite number, but infinitely many natural numbers can not be expressed as the sum of two prime numbers. This motivates us to think that for given set $A$, there is a set $B$ such that we get a positive answer  to the above question and there is a set $B$ such that we get negative answer to the above question. This leads to the following question.
	\begin{question}
		For given set $A\subset
		\mathbb{N}$, classify all those subsets $B$ of $\mathbb{N}$ such that all sufficiently large natural numbers  can be written in the form of $x+y$ where $x\in A$ and $y\in B$?
	\end{question} 
	\noindent This boils down to the following notion of {\it{additive complement}}:
	\begin{definition}[Additive complement\footnote{Note that the `complement of a set' is different from `additive complement of a set'. Complement of a set $A$ in $\mathbb{N}$ is all natural number not belonging to $A$.}]
		Let $A\subset \mathbb{N}$. Then a set of natural numbers $B$ is said to be an additive complement of $A$ if $(\mathbb{N}\setminus (A+B))$ is finite.
	\end{definition}
	Note that the set $\mathbb{N}$ is always  an additive complement of every subset of $\mathbb{N}$. Now we are looking for some sparse additive complement for a given set. To get a notion of sparseness, we consider the notion of \textit{density} of a set. 
	\begin{definition}
		Let $A$ be a set of natural numbers. Then upper asymptotic density and lower asymptotic density, denoted by $\overline{d}(A)$ and $\underline{d}(A)$ respectively, is defined as follows
		\[\overline{d}(A) = \limsup_{n\rightarrow \infty} \dfrac{|A \cap [1,n]|}{n} \quad \text{and} \qquad \underline{d}(A) = \liminf_{n\rightarrow \infty} \dfrac{|A \cap [1,n]|}{n}. \]
		If $\overline{d}(A) = \underline{d}(A) $, then we say that density of $A$, denoted by $d(A)$, exists  where $\overline{d}(A) = \underline{d}(A)  = d(A)$. 
	\end{definition}
	\noindent Using this notion, we define that a set $A$ is said to be large if the $\overline{d}(A) > 0$ and a set is said to be sparse if it is not a large set. In communication with G.G. Lorentz, P. Erd\H{o}s conjectured that for each infinite set  of  natural numbers there exist an additive complement  with density zero \cite{Lorentz}. In 1954, Lorentz gave affirmative solution to this conjecture in the form of following proposition. 
	\begin{proposition}[Lorentz, \cite{Lorentz}]\label{Prop-Lorentz} Let $A$ be an infinite set of natural numbers. Then there exists a subset $B$ of $\mathbb{N}$ such that  $B$ is an additive complement of $A$ and \begin{equation*}
			\left |B \cap [1,n] \right | \leq C\sum_{k=1}^{n} \dfrac{\log |A \cap [1,k]|}{|A \cap [1,k]|},
		\end{equation*}
		where $C$ is an absolute constant and the terms of the sum with $|A \cap [1,k]| = 0$ are to be replaced by one.
	\end{proposition}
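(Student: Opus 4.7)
The strategy is a greedy set-cover construction of $B$ carried out on successive blocks in which the counting function $A(n) := |A \cap [1,n]|$ is approximately constant. First I would partition $\mathbb{N}$ into consecutive intervals $I_k = [N_k, N_{k+1})$, choosing $N_{k+1}$ inductively as the smallest integer exceeding $N_k$ for which either $N_{k+1} \geq 2 N_k$ or $A(N_{k+1}) \geq 2 A(N_k)$. This guarantees that both $N$ and $A$ grow by at most a factor of two across any block, so $A(m) \asymp A(N_k)$ throughout $I_k$.

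Next, on each block $I_k$ I would select $B_k \subseteq [1, N_{k+1}]$ satisfying $A + B_k \supseteq I_k$ via the standard greedy set cover: regarding each $b \in [1, N_{k+1}]$ as defining the covering set $S_b := (b + A) \cap I_k$, I repeatedly append to $B_k$ the $b$ maximizing the number of newly covered elements of $I_k$. A uniform fractional cover assigning weight $1/A(N_k)$ to every $b \in [1, N_{k+1}]$ has total weight $O(N_k / A(N_k))$, since each $m \in I_k$ is covered by at least $A(N_k)$ choices of $b$ (namely $b = m - a$ with $a \in A \cap [1, N_k]$). Applying the Johnson--Lov\'asz $H_d$-approximation analysis of greedy set cover, with $d = \max_b |S_b| \leq A(N_{k+1}) \leq 2 A(N_k)$, then yields
\[
|B_k| \;\leq\; C \cdot \frac{N_k \log A(N_k)}{A(N_k)}.
\]

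Finally I would take $B := \bigsqcup_k B_k$, which is an additive complement of $A$ by construction. For any $n$ with $N_K \leq n < N_{K+1}$, summing the block bounds gives $|B \cap [1,n]| \leq \sum_{k \leq K} |B_k|$, and the block-regularity $A(j) \asymp A(N_k)$ for $j \in I_k$ forces $\sum_{j \in I_k} \log A(j)/A(j) \asymp |I_k| \log A(N_k)/A(N_k) \asymp N_k \log A(N_k)/A(N_k)$ (whenever $|I_k| \asymp N_k$; the short-block case where $A$ jumps is handled separately by the trivial bound $|B_k| \leq |I_k|$ and a comparison against the many preceding blocks). This lets the dyadic sum be absorbed into the claimed sum $\sum_{k=1}^n \log A(k)/A(k)$, up to a multiplicative constant.

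The main obstacle is extracting the factor $\log A$ rather than $\log N$ (or $\log k$) in the per-block bound: a direct averaging/probabilistic argument only yields $|B_k| = O(N_k \log N_k / A(N_k))$, which produces the weaker conclusion $|B \cap [1,n]| \leq C \sum \log k / A(k)$. The sharpening to $\log A$ requires the refined Johnson--Lov\'asz bound together with the observation that the partition can be chosen so that the maximum covering-set size $\max_b |S_b|$ is comparable to $A(N_k)$, rather than to the potentially much larger interval length. Verifying the sum comparison at the short-block transitions is the other technical point that would require care.
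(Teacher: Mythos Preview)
The paper does not prove this proposition at all; it is quoted from Lorentz \cite{Lorentz} and used only as a black box, so there is no ``paper's own proof'' to compare your argument against.

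That said, your sketch has a genuine gap, and it is not merely the short-block bookkeeping you flag. Your construction forces $A+B \supseteq \bigcup_k I_k = \mathbb{N}$, i.e.\ it covers \emph{every} block; but the proposition only asks that $B$ be an additive complement (finitely many exceptions allowed), and in fact the stated inequality is in general incompatible with full coverage. Take $A=\{1\}\cup[M,2M]\cup(\text{anything sparse beyond})$. For $n\le M-1$ one has $A(n)=1$, so $\sum_{k\le n}\log A(k)/A(k)=0$ and the inequality forces $B\cap[1,M-1]=\varnothing$. But then, for $m\in[M+1,2M-1]$, the only representation $m=a+b$ with $a\in A$ and $b\ge M$ is $a=1$, $b=m-1$; so covering $[M+1,2M-1]$ forces $[M,2M-2]\subseteq B$, giving $|B\cap[1,2M]|\ge M-1$, whereas the target sum up to $2M$ is only $\asymp(\log M)^2$. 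Thus any $B$ built by your per-block covering necessarily violates the bound at $n=2M$. Neither your Johnson--Lov\'asz estimate $|B_k|\le C\,N_k\log A(N_k)/A(N_k)$ nor the trivial $|B_k|\le|I_k|$ rescues this, and there are no ``preceding long blocks with surplus'' to borrow from (on $[1,M)$ the target sum is identically zero).

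Lorentz's actual argument exploits the finite-exception clause in an essential way: one does not attempt to cover the blocks on which $A$ is still tiny, and the partition is taken only at $A$-doublings (your extra $N$-doubling cut points are what manufacture the problematic short blocks in the first place). The per-block estimate is then $|B_k|\lesssim N_{k+1}\,k/2^k$, and the comparison with $\sum_j \log A(j)/A(j)\asymp\sum_k (N_{k+1}-N_k)\,k/2^k$ goes through by a summation-by-parts using that $k/2^k$ is eventually decreasing; the boundary contribution from the first nontrivial level is exactly what gets absorbed into the finite exceptional set rather than into $B$.
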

	\noindent This proposition guarantees the existence of sparse additive complement of an infinite set in $\mathbb{N}$, but one would be interested for  those additive complement of an infinite set that is subset of  a given set of natural numbers.  By additive complements of a set $A$ in given set $C$, we mean those additive complements of $A$ that are subsets of $C$.   More precisely, we ask the following question.
	\begin{question}\label{q2}
		For given $A \subset \mathbb{N}$ and $C\subset \mathbb{N}$, does there exist an additive complement of $A$ in $C$?
	\end{question}
	\noindent Trivially, answer of this question is negative by observing that there is no additive complement of the set of even natural numbers in the set of  odd natural numbers. Moreover, Theorem \ref{Thm-1}, given below, gives infinite  family of sets  $A$ such that every additive complement of $A$ intersects set $A$.
	\begin{theorem}\label{Thm-1}
		Let $S$ be an infinite subset of the natural numbers. Then there exists $A\subset \mathbb{N}$  such that $(A+(\mathbb{N}\setminus A))\cap S=\varnothing.$ 
	\end{theorem}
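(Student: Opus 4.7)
The plan is to reformulate the condition $(A+(\mathbb{N}\setminus A))\cap S=\varnothing$ as a monochromatic-edge requirement on an auxiliary graph, and then choose $A$ to be compatible with that requirement.

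I would first define a graph $G$ on vertex set $\mathbb{N}$ by declaring $\{x,y\}$ to be an edge whenever $x+y\in S$. The condition in the theorem is then equivalent to saying that no edge of $G$ has one endpoint in $A$ and the other in $\mathbb{N}\setminus A$: for if $s=a+b\in S$ with $a\in A$ and $b\notin A$, then $\{a,b\}$ is precisely such a bichromatic edge, and conversely every bichromatic edge contributes an element of $S$ to $A+(\mathbb{N}\setminus A)$. Thus we are really looking for a $2$-coloring of $\mathbb{N}$ (given by the pair $A$, $\mathbb{N}\setminus A$) in which every edge of $G$ is monochromatic.

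The key observation is that any union of connected components of $G$ produces a valid $A$. The two trivial unions $A=\varnothing$ and $A=\mathbb{N}$ already satisfy the conclusion and give a one-line existence proof. For a more substantive construction I would fix an element $a_0\in\mathbb{N}$ and let $A$ be the equivalence class of $a_0$ under the transitive closure of the edge relation: explicitly, $a\in A$ iff there is a finite chain $a_0=z_0,z_1,\ldots,z_k=a$ with $z_i+z_{i+1}\in S$ for each $0\le i<k$. Verification is then immediate: if $s\in S\cap(A+(\mathbb{N}\setminus A))$ is written $s=a+b$ with $a\in A$, $b\notin A$, then the edge $\{a,b\}$ of $G$ forces $b$ into the same equivalence class as $a$, contradicting $b\notin A$.

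The main potential obstacle arises only if one insists that $A$ be a proper infinite subset: for some infinite $S$ the graph $G$ is connected (for instance, when $S$ consists of all odd integers at least $3$, any two natural numbers are linked by a very short chain, collapsing $\mathbb{N}$ into a single component), so the nontrivial version of the statement fails for such $S$. For the theorem as written, however, existence is already guaranteed by the trivial component-union choice, and the interesting content is the graph-theoretic viewpoint, which one would presumably exploit later to produce richer families of such $A$.
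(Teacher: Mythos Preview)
Your proof is correct and is essentially the paper's argument in cleaner language: the paper picks an arbitrary seed $X\subset\mathbb{N}$ and iteratively closes it under all maps $x\mapsto s-x$ with $s\in S$, and the resulting set $A=\bigcup_n A_n$ is exactly the union of the $G$-components meeting $X$. Your observation that $A=\varnothing$ (or $A=\mathbb{N}$) already settles the statement as written is valid and corresponds to the seed $X=\varnothing$; the paper carries out the explicit closure only because the freedom in $X$ is what the subsequent corollary exploits to build an \emph{infinite} density-zero $A$ when $S$ is a geometric progression.
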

	\noindent
	Proof of this theorem is given in Section \ref{section3}. By investigating set $A$ with given density $\alpha$ in the above theorem, we obtain the following theorem.
	\begin{theorem}\label{Thm-2}
		Let $\alpha\in [0,1]_{\mathbb{R}}$. Then there exists $A\subset \mathbb{N}$ with $d(A)=\alpha$ such that there is no additive complements of $A$ in the complement of $A$. 
	\end{theorem}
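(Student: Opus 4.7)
The plan is to construct $A \subset \mathbb{N}$ with a self-similar symmetry about every power of $2$, so that $A + (\mathbb{N}\setminus A)$ avoids the infinite set $S = \{2^i : i \geq 1\}$, while the density is tuned to any prescribed $\alpha \in [0,1]$ by freely choosing whether each $2^j$ lies in $A$. This refines the construction behind Theorem \ref{Thm-1} by adding density control.

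Given $\alpha$, first fix a binary representation $2\alpha = \sum_{j=0}^{\infty} \epsilon_j 2^{-j}$ with $\epsilon_j \in \{0,1\}$ (valid since $2\alpha \in [0,2]$). I would build $A$ inductively so that $A \cap [1, 2^i - 1]$ is symmetric about $2^{i-1}$ for every $i \geq 1$: assuming $A \cap [1, 2^i - 1]$ has been defined, let $A \cap [2^i + 1, 2^{i+1} - 1]$ be the image of $A \cap [1, 2^i - 1]$ under $a \mapsto 2^{i+1} - a$, and include $2^i \in A$ iff $\epsilon_i = 1$. The degenerate cases $\alpha = 0$ and $\alpha = 1$ correspond to $A = \varnothing$ (all $\epsilon_j = 0$) and $A = \mathbb{N}$ (all $\epsilon_j = 1$), both of which trivially have no complement in $\mathbb{N}\setminus A$.

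Next I would verify the avoidance property. By construction, if $a \in A$ with $a < 2^i$, then $2^i - a \in A$ as well, so no equation $a + b = 2^i$ with $a \in A$ admits a solution $b \in \mathbb{N} \setminus A$. Hence $S \cap (A + (\mathbb{N}\setminus A)) = \varnothing$. For any $B \subset \mathbb{N}\setminus A$ we have $A + B \subset A + (\mathbb{N}\setminus A)$, so $\mathbb{N}\setminus (A + B) \supset S$ is infinite and $B$ fails to be an additive complement of $A$.

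The main obstacle is showing that $d(A)$ exists and equals $\alpha$, not merely along the subsequence $n = 2^i$. Let $f(n) = |A \cap [1,n]|$ and $a_i = f(2^i - 1)$. The recursion $a_{i+1} = 2a_i + \epsilon_i$ unfolds to $a_i = 2^{i-1}\sum_{j=0}^{i-1}\epsilon_j 2^{-j}$, hence
\[
|a_i - 2^i \alpha| \;=\; 2^{i-1}\Bigl|\sum_{j \geq i}\epsilon_j 2^{-j}\Bigr| \;\leq\; 1.
\]
For general $n \in [2^i, 2^{i+1} - 1]$, setting $s = 2^{i+1} - 1 - n \in [0, 2^i - 1]$, the reflection identity gives $f(n) = a_{i+1} - f(s)$; combining with $n + s = 2^{i+1}-1$ yields $|f(n) - n\alpha| \leq |a_{i+1} - 2^{i+1}\alpha| + \alpha + |f(s) - s\alpha| \leq 2 + |f(s) - s\alpha|$. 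Iterating this recursion across the $\log_2 n$ dyadic scales produces $|f(n) - n\alpha| = O(\log n)$, so $f(n)/n \to \alpha$ as $n \to \infty$, completing the argument.
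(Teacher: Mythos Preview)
Your proof is correct and takes a genuinely different route from the paper's. The paper splits into cases: $\alpha=0$ is handled via Corollary~\ref{not_additive_complement_density_zero}, rational $\alpha$ via a simple union of residue classes (Proposition~\ref{THM-4rational_proposition}), and the remaining $\alpha\in(0,1)$ via the UDAP set $\mathbb{M}(2,(a_j))$, where the density is read off directly from Corollary~\ref{density_union_ap_corollary} and a somewhat intricate computation shows that $A+(\mathbb{N}\setminus A)$ misses the $\alpha$-dependent set $\bigl\{\sum_{j=1}^{l-1}a_j2^j:l\geq 2\bigr\}$. By contrast, your single self-similar construction works uniformly for every $\alpha\in[0,1]$, and the reflection symmetry makes the avoidance of the fixed set $\{2^i:i\geq 1\}$ a one-line observation; the price you pay is that the density is no longer immediate and requires the recursive estimate $|f(n)-n\alpha|=O(\log n)$, which you carry out correctly. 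What the paper's approach buys is that the resulting $A$ is a UDAP set, tying into the rest of the article; what your approach buys is a cleaner, case-free argument with a more explicit avoided set and a quantitative discrepancy bound.
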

	\noindent
	Proof of this theorem is given in Section \ref{section4}. Existence  of density zero set in conclusion of the above theorem can be obtained by Theorem \ref{Thm-1} (see Corollary \ref{not_additive_complement_density_zero}).  But to  construct a set $A$   with given   positive density satisfying conclusion of the above theorem,  we investigate a set which is the union of disjoint infinite arithmetic progressions  of given density and that leads to the following question. 
	\begin{question}
		For given $\alpha \in (0,1]_{\mathbb{R}}$ and a finite set $B \subset \mathbb{N}$, does there exist a set $A \subset\mathbb{N}$ such that $d(A+B) = \alpha$ and $A$ is disjoint union of infinite arithmetic progressions? 
	\end{question}
	\noindent In the above question, investigation of  $A$ as the union of finite arithmetic progressions instead of the union of disjoint infinite arithmetic progressions is settled by Grekos et. al. \cite{Grekos}.  Some work related to the above question is also done  in articles Erdos \cite{erdos}, Lorentz \cite{Lorentz}, Faisant et. al \cite{Faisant}.  The following theorem settle the above question.  
	\begin{theorem}\label{Thm-3}
		Let $B$ be a finite subset of natural numbers and $\alpha \in (0,1]_{\mathbb{R}}$. Then the following statements hold:
		\begin{enumerate}[label=(\alph*)]
			\item   \label{THM-3Part1}  There exists a set $A\subset \mathbb{N}$ such that $A$ is the union of disjoint infinite arithmetic progressions and $d(A+B)=\alpha$.
			\item \label{THM-3Part2}There exists a set $A$ such that  $d(A)=\alpha$, it is the union of disjoint infinite arithmetic progressions and it also contains infinitely many primes.
			\item  \label{THM-3Part3}There exists a set $A$,  such that $d(A)=\alpha$, it is the union of disjoint infinite arithmetic progressions and it does not contain any prime.
		\end{enumerate} 
	\end{theorem}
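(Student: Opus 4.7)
The plan is to treat part (a) first; parts (b) and (c) will follow by lightly constraining the residues admitted in the construction of (a). The device used throughout is the observation that if $d > \max B - \min B$ and $R \subseteq \{0,1,\dots,d-1\}$, then $A := \bigsqcup_{r \in R}(r + d\mathbb{N}_0)$ is a disjoint union of infinite arithmetic progressions and $A+B$ is periodic modulo $d$ with density $|(R+B)\bmod d|/d$.

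For part (a), I would dispose of $\alpha = 1$ by taking $A = \mathbb{N}$, and of rational $\alpha = p/q$ by letting $d$ be a large multiple of $q$ and taking $R$ to be an integer interval in $\{0,\dots,d-1\}$ whose length is chosen so that $|(R+B)\bmod d| = pd/q$; such a length exists because $R+B$ is an integer interval of size $|R| + \max B - \min B$ once $|R|$ is large enough. For irrational $\alpha$ I would build $A$ inductively along a divisibility chain $d_1 \mid d_2 \mid d_3 \mid \cdots$ with $d_k \to \infty$, maintaining sets $R_k \subseteq \{0,\dots,d_k-1\}$ such that the lift $\tilde R_k := \{r + jd_k : r \in R_k,\ 0 \leq j < d_{k+1}/d_k\}$ is contained in $R_{k+1}$, and such that $\bigl| |(R_k+B)\bmod d_k|/d_k - \alpha \bigr| \leq |B|/d_k$. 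The refinement condition guarantees that the APs added at stage $k+1$, namely those indexed by $R_{k+1}\setminus \tilde R_k$, are disjoint from all earlier APs, so $A := \bigsqcup_{k \geq 1}\bigsqcup_{r \in R_k \setminus \tilde R_{k-1}}(r + d_k\mathbb{N}_0)$ is a disjoint union of infinite APs whose $B$-sumset has density $\alpha$. To realize the inductive density condition, I would use the elementary fact that adjoining a single residue outside $\tilde R_k$ to the partial $R_{k+1}$ increases $|(R_{k+1}+B)\bmod d_{k+1}|$ by some integer between $0$ and $|B|$, so an integer intermediate-value argument lets us reach any target count in $[|(\tilde R_k+B)\bmod d_{k+1}|,\, d_{k+1}]$ to within $|B|$.

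For part (b), the same construction applied with $B = \{0\}$ yields $d(A)=\alpha$; fixing $d_1$ odd and forcing $1 \in R_1$ includes the entire AP $1 + d_1\mathbb{N}_0$ in $A$, so Dirichlet's theorem supplies infinitely many primes. For part (c) I would restrict the admissible residues at every stage by requiring $\gcd(r, d_k) > 1$ and $r > \gcd(r, d_k)$, so that every element of each admitted AP is a proper multiple of its gcd and hence composite; taking $d_k$ to be the product of the first $k$ primes gives $\phi(d_k)/d_k \to 0$, so the admissible residues have density approaching $1$ in $\mathbb{Z}/d_k\mathbb{Z}$, leaving ample room to reach any $\alpha \in (0,1]$ while preserving the adjustment argument from part (a).

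The main technical obstacle is the granularity step in part (a), namely the claim that the integer function $R \mapsto |(R+B)\bmod d_{k+1}|$, as $R$ ranges over extensions of $\tilde R_k$, attains every value in its range to within an additive $|B|$. Once this is in hand, the rest is standard density bookkeeping, together with the observation that for our nested construction $d(A+B) = \lim_k |(R_k+B)\bmod d_k|/d_k$; parts (b) and (c) then follow by the minor adjustments described above.
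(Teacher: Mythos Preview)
Your overall architecture is reasonable, but there is a genuine gap in the step you call ``standard density bookkeeping'': the assertion that $d(A+B)=\lim_k |(R_k+B)\bmod d_k|/d_k$ is \emph{false} for a generic nested construction of the kind you describe. Here is a clean counterexample with $B=\{0\}$ and $d_k=2^k$. Take $R_1=\varnothing$ and, for $k\ge 2$, set $R_k=\tilde R_{k-1}\cup\{r_k\}$ where $r_k$ is the \emph{smallest} nonnegative integer not in $R_{k-1}$. Then $|R_k|=2^{k-1}-1$, so $\beta_k:=|R_k|/d_k\to \tfrac12$, and the induction hypothesis $|\beta_k-\tfrac12|\le 1/d_k$ holds. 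Yet $A=\bigsqcup_{k\ge 2}(r_k+2^k\mathbb{N}_0)$ equals all of $\mathbb N_0$: every integer is either some $r_j$ (hence in $A$) or a ``lifted'' element $r+2^{k-1}$ with $r\in R_{k-1}$, and those lie in $A_{k-1}\subseteq A$ as well. So $d(A)=1$, not $\tfrac12$. The point is that you control only the residue sets $R_k$, but the density of the limit set $A$ also depends on \emph{where in $[0,d_k-1]$} the new residues sit; placing them all near $0$ lets the tail $A\setminus A_m$ flood every initial segment $[1,n]$.

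The paper sidesteps exactly this by forcing the stage-$k$ refinements to live inside a \emph{single} residue class modulo $d_{k-1}$: this is the hypothesis ``$\bigcup_{i>m}X_i$ is contained in an arithmetic progression of difference $q^m$'' in Lemma~\ref{density_union_of_ap}, realised concretely by the base-$q$ digit construction $\mathbb M(q,(a_j))$. With that constraint the tail beyond stage $m$ has upper density at most $1/d_m$, and the bookkeeping really is routine. Your intermediate-value mechanism survives this constraint intact (you can still add residues one at a time within the designated class), so your proof is salvageable---but you must impose the confinement explicitly and invoke it when bounding $\overline d(A+B)$. Parts (b) and (c) inherit the same issue and the same fix; once (a) is repaired, your Dirichlet and $\phi(n)/n\to 0$ arguments for (b) and (c) go through essentially as you wrote them, matching the paper's approach.
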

	\noindent Proof of the above theorems is given in Section \ref{section5}.
	%
	
	In order  to study   Question \ref{q2}, we are able to  answer the  following particular question in this article:
	\begin{question}\label{q3}
		Let $B$ be an infinite set of natural numbers  and $\alpha\in (0,1]_{\mathbb{R}}$. Does there exist a set of natural numbers $E$ such that $d(E)=\alpha$, $E$ is an additive complement of $B$, and $E$ is the union of disjoint infinite arithmetic progressions?
	\end{question} 
	If we are able to produce the union of disjoint infinite arithmetic progressions, with arbitrary density, which contains a given set with zero density, then Proposition \ref{Prop-Lorentz} gives an affirmative answer of the above question. For example, one can produce the union of disjoint infinite arithmetic progressions, with arbitrary density, which contains all primes, by using Theorem \ref{Thm-3}. But, we are not sure whether any set with zero density can be seen inside the
	union of disjoint infinite arithmetic progressions with arbitrary density.
	So we are answering  Question \ref{q3} here by some different method. 
	For the case of infinite geometric progression, we may get an affirmative answer of Question \ref{q3} using the theory of primitive root  in the form of Theorem \ref{Thm-4}.

	\begin{theorem}\label{Thm-4}
		Let $\alpha \in (0,1]_{\mathbb{R}}$, $a\in \mathbb{N}$, $g\in \mathbb{N}\setminus \{1\}$, and $B$ be the set of all elements of the form $ag^k$ with $k\in \mathbb{N}$. Suppose that there exists prime $p$ such that $p\nmid a$ and $g$ is primitive root modulo $p^2$. Then there exists an additive complement $A$ such that it can be written as the union of disjoint infinite arithmetic progressions and  $d(A)=\alpha$.
		
	\end{theorem}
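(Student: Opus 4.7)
The plan rests on the classical lifting result that the hypothesis ``$g$ is a primitive root modulo $p^2$'' forces $g$ to be a primitive root modulo $p^m$ for every $m \geq 1$. Combined with $p \nmid a$, this gives $\{a g^k \bmod p^m : k \in \mathbb{N}\} = (\mathbb{Z}/p^m\mathbb{Z})^{\times}$, with each residue attained for infinitely many $k$. I build $A$ in two stages: a small ``seed'' $A_0$ that is a disjoint union of two APs and already an additive complement of $B$, together with further disjoint APs padding $A_0$ up to density exactly $\alpha$.

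For the seed, fix $n$ large enough that $2/p^n \leq \alpha$, and put
\[
A_0 \;=\; \{k p^n : k \in \mathbb{N}\} \;\cup\; \{1 + k p^n : k \in \mathbb{N}_0\},
\]
the two APs of common difference $p^n$ with residues $0$ and $1$. For any $t \in \{0, 1, \ldots, p^n-1\}$, one of $t$, $t-1$ is coprime to $p$ (they have different residues modulo $p$), so $t - r \in (\mathbb{Z}/p^n\mathbb{Z})^{\times}$ for some $r \in \{0,1\}$, and by the surjectivity above there is $k$ with $a g^k \equiv t - r \pmod{p^n}$. Every sufficiently large $N \equiv t \pmod{p^n}$ then decomposes as $N = r + p^n m + a g^k \in A_0 + B$, so $A_0$ is an additive complement of $B$ of density $2/p^n$.

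To raise the density to $\alpha$, expand
\[
\alpha - \frac{2}{p^n} \;=\; \frac{k_0}{p^n} + \sum_{i \geq 1} \frac{k_i}{p^{n+i}}, \qquad k_0 \in \{0, 1, \ldots, p^n-2\},\ k_i \in \{0, 1, \ldots, p-1\},
\]
which is possible since the right-hand side sweeps out $[0, 1-1/p^n] \supseteq [0, 1 - 2/p^n]$. Adjoin $k_0$ APs of common difference $p^n$ with residues drawn from $\{2, \ldots, p^n - 1\}$; then at each subsequent level $n+i$, subdivide the still-free residues modulo $p^{n+i-1}$ into residues modulo $p^{n+i}$ and pick $k_i$ new ones. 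Writing $D_j$ for the density accumulated after level $n + j$, one has $D_j = 2/p^n + k_0/p^n + \sum_{i=1}^j k_i/p^{n+i} \leq \alpha$, so the number of free residues at level $n + i$, namely $p^{n+i}(1 - D_{i-1})$, always exceeds $k_i$, and the selection is possible. The resulting $A$ is by construction a disjoint union of infinite APs of density $\alpha$ containing $A_0$, hence an additive complement of $B$.

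The principal technical obstacle is the bookkeeping across infinitely many refinement levels: one must select residues carefully so that disjointness is preserved and the accumulated density converges to $\alpha$ exactly, with special care for edge cases such as $\alpha = 1$ (which forces $k_0 = p^n - 2$ and all subsequent $k_i = 0$, making $A$ fill every residue class modulo $p^n$).
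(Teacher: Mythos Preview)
Your proposal is correct and follows essentially the same approach as the paper: both take the seed $A_0$ to be the two residue classes $0$ and $1$ modulo a prime power $p^n$ with $2/p^n \le \alpha$, use the primitive-root hypothesis to show that $A_0$ is already an additive complement of $B$, and then pad $A_0$ with further disjoint infinite arithmetic progressions to reach density exactly $\alpha$. The only cosmetic difference is in the padding step---the paper invokes its prebuilt construction $\mathbb{M}(p^m,(a_i)_{i=1}^\infty)$ coming from a base-$p^m$ expansion of $\alpha$, whereas you carry out an equivalent construction by hand via a mixed-radix refinement.
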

	\noindent
	Using the fact that there exist  infinitely many primes $q$ such that $q$ is primitive root modulo $p$ for infinitely many prime $p$ (see Corollary \ref{primitive_root_heath_boron_2} and Definition \ref{primitive_root_definition}), we obtain the following corollary of the above theorem.
	\begin{corollary} \label{THM-4Cor1}
		There are infinitely many infinite geometric progressions such that the conclusion of Theorem \ref{Thm-4} holds.
	\end{corollary}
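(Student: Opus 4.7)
The plan is to apply Corollary~\ref{primitive_root_heath_boron_2} to obtain infinitely many primes $q$ each of which is a primitive root modulo infinitely many primes, and then to combine this with a classical lifting trick in order to produce, for each such $q$, a prime $p$ and an integer $g\in\{q,\,q+p\}$ that is a primitive root modulo $p^2$. Taking $a=1$ in Theorem~\ref{Thm-4} (so that the coprimality condition $p\nmid a$ is automatic) will then exhibit infinitely many geometric progressions to which Theorem~\ref{Thm-4} applies.

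First I would record the following classical lemma: if $g$ is a primitive root modulo an odd prime $p$, then at least one of $g$ or $g+p$ is a primitive root modulo $p^2$. The proof uses the binomial expansion
\[
(g+p)^{p-1}\equiv g^{p-1}-g^{p-2}p\pmod{p^2},
\]
together with the observation that $g^{p-2}\not\equiv 0\pmod{p}$, since $\gcd(g,p)=1$. Thus if $g^{p-1}\equiv 1\pmod{p^2}$ then $(g+p)^{p-1}\not\equiv 1\pmod{p^2}$, so the multiplicative order of $g+p$ modulo $p^2$ must equal the full value $p(p-1)$; and of course if $g^{p-1}\not\equiv 1\pmod{p^2}$ then $g$ itself is already a primitive root modulo $p^2$.

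Next I would list infinitely many primes $q_1<q_2<\cdots$ as provided by Corollary~\ref{primitive_root_heath_boron_2}, choose for each $q_i$ a prime $p_i>q_i$ such that $q_i$ is a primitive root modulo $p_i$, and set $g_i=q_i$ if $q_i^{p_i-1}\not\equiv 1\pmod{p_i^2}$ and $g_i=q_i+p_i$ otherwise. By the lemma $g_i$ is a primitive root modulo $p_i^2$, and since $p_i\nmid 1$ the pair $(a,g)=(1,g_i)$ meets the hypothesis of Theorem~\ref{Thm-4} with witness prime $p_i$. Because $g_i\ge q_i\to\infty$, any fixed integer is attained by only finitely many of the $g_i$, so after passing to a subsequence the $g_i$ are pairwise distinct; with $a=1$ fixed this yields infinitely many pairwise distinct geometric progressions $B_i=\{g_i^k:k\in\mathbb{N}\}$ for each of which Theorem~\ref{Thm-4} delivers an additive complement of the asserted form and density.

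I do not anticipate a substantive obstacle: the deep input (Artin-type existence of primitive-root primes) is already packaged as Corollary~\ref{primitive_root_heath_boron_2}, the lift from $p$ to $p^2$ is a one-line binomial calculation, and distinctness of the resulting progressions is forced by the growth $g_i\to\infty$. The only mild bookkeeping is to pass to the subsequence ensuring the $g_i$ are genuinely distinct rather than merely unbounded.
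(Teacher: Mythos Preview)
Your proposal is correct and follows essentially the same route as the paper: invoke Heath-Brown (Corollary~\ref{primitive_root_heath_boron_2}) to obtain infinitely many primes that are primitive roots modulo some prime~$p$, lift to a primitive root modulo~$p^2$ via the standard $g\mapsto g+p$ trick (the paper packages this as Lemma~\ref{primitive_root_property2}), and then apply Theorem~\ref{Thm-4} with $a=1$. The only organisational difference is that the paper splits into two global cases (either every $g\in P_1$ already lifts to some $p^2$, or some fixed $h$ fails everywhere and one uses the family $h+tr$), whereas you treat each $q_i$ uniformly and pass to a subsequence at the end; your version is slightly more streamlined but not materially different.
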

	\noindent 
	Proof of Theorem \ref{Thm-4} and the above corollary is given in Section \ref{section6}.

	\section{Preliminary}\label{section2}
	This section provides some information required to prove main theorems. 
	\subsection{Primitive root}
	\begin{definition}\label{primitive_root_definition}
		Let $n$ and $X$ be positive integers. Then  $x$ is called primitive root modulo $n$ if for every integer $a$  relative coprime to $n$, there exist positive integer $m_a$ such that   $x^{m_a}\equiv a\pmod{n}.$
	\end{definition}
	In relation to primitive root modulo prime, Artin gave the following conjecture:
	\begin{conjecture}
		Let $a$ be integer such that $a$ is neither perfect square nor $-1$. Then there exists infinitely many primes $p$ such that $a$ is primitive root modulo $p$.
	\end{conjecture}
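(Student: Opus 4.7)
The statement is Artin's primitive root conjecture, one of the most celebrated open problems in analytic number theory. I would not expect to produce an unconditional proof in this setting; rather, the plan is to describe the standard heuristic, the conditional proof under GRH, and the best known unconditional results, while being clear about where the genuine obstacle lies.

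The first step is to translate ``$a$ is a primitive root modulo $p$'' into a Galois-theoretic condition. The multiplicative order of $a$ modulo $p$ equals $p-1$ if and only if, for every prime divisor $\ell$ of $p-1$, the element $a$ is not an $\ell$-th power residue modulo $p$. For each prime $\ell$, the condition ``$\ell \mid p-1$ and $a$ is an $\ell$-th power mod $p$'' is equivalent to $p$ splitting completely in the Kummer extension $K_\ell = \mathbb{Q}(\zeta_\ell, a^{1/\ell})$. Thus the set of primes for which $a$ fails to be a primitive root can be written as a union of sets of primes splitting completely in various $K_\ell$, and an inclusion–exclusion across squarefree $\ell$ sets up a sieve. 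The hypothesis that $a$ is neither $-1$ nor a perfect square is precisely what keeps the relevant Galois groups ``generic'' enough for this sieve to give a positive density.

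The second step is Hooley's 1967 argument: assuming the Generalized Riemann Hypothesis for the Dedekind zeta functions of all the $K_\ell$, one applies the effective Chebotarev density theorem to estimate the number of primes up to $x$ splitting completely in $K_\ell$ with a sufficiently small error term, uniformly in $\ell$ up to about $(\log x)^2$. The tail $\ell > (\log x)^2$ is controlled by an elementary argument that handles large prime factors of $p-1$. Summing the inclusion–exclusion yields an asymptotic density equal to Artin's constant (with the appropriate correction factor depending on the arithmetic of $a$), which is strictly positive under the hypothesis on $a$, giving in particular infinitely many such primes.

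The genuine obstacle, and the reason no unconditional proof is known for a single specific value of $a$, is the lack of a strong enough unconditional Chebotarev bound: without GRH one cannot push $\ell$ past roughly $\log x$, and the uncontrolled medium range defeats the sieve. The best one can do unconditionally is the Gupta--Murty / Heath-Brown approach, which combines the sieve with results on almost-primes in arithmetic progressions to show that at most two primes, and at most one squarefree integer greater than $1$, can be exceptions. In particular, at least one of $\{2,3,5\}$ is a primitive root for infinitely many primes; this is the form of the conjecture that the paper actually needs via its invocation of \emph{primitive\_root\_heath\_boron\_2}, and it is the substitute I would use if an unconditional statement were required.
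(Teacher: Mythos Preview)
Your assessment is correct and aligns with the paper's own treatment: the paper does \emph{not} prove this statement. It is labelled as a \texttt{conjecture} and the surrounding text says explicitly ``Yet, this conjecture is not resolved for any single value.'' The paper then quotes Heath-Brown's partial result (at most two prime exceptions) as Lemma~\ref{primitive_root_heath_boron_1} and Corollary~\ref{primitive_root_heath_boron_2}, and uses only that corollary downstream, exactly as you suggest in your final paragraph. Your summary of the Hooley conditional argument and the Gupta--Murty/Heath-Brown unconditional approach is accurate background, but none of it is needed for the paper's applications; the sole input is Corollary~\ref{primitive_root_heath_boron_2}.
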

	\noindent
	Yet, this conjecture is not resolved for any single value. Heath-Brown \cite{Heath-Brown} has given some partial answer in the positive direction of Artin's conjecture in the form of Lemma \ref{primitive_root_heath_boron_1} and Corollary \ref{primitive_root_heath_boron_2}.
	\begin{lemma}[Heath-Brown, \cite{Heath-Brown}] \label{primitive_root_heath_boron_1}
		Let $p, q$ and $r$ be multiplicatively independent integers\footnote{The three non zero integers $p,q$ and $r$ are multiplicatively independent integers if $p^aq^br^c=1$ with $a,b,c\in \mathbb{Z}$ implies $a=0,b=0$, and $c=0$.} such that none of $q,r,s,-3qr$, $-3qs,-3rs$ or $qrs$ is a square. Then there exists infinitely many primes $p$ such that at least one of $q,r$ and $s$ is primitive root modulo prime $p$.
	\end{lemma}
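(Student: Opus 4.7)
The plan is to adapt the sieve framework of Hooley's conditional proof of Artin's conjecture, replacing GRH by a combinatorial use of the three multiplicatively independent bases. First I would reformulate the failure condition: $q$ is not a primitive root modulo $p$ if and only if there exists a prime $\ell \mid p-1$ with $q^{(p-1)/\ell} \equiv 1 \pmod p$, which in turn is equivalent to $p$ splitting completely in the Kummer--cyclotomic extension $K_\ell(q) = \mathbb{Q}(\zeta_\ell, q^{1/\ell})$. Inclusion--exclusion over squarefree $k$ then expresses the count of primes $p \le x$ for which $q$ \emph{is} a primitive root as an alternating sum of splitting counts in the fields $K_k(q)$, bringing the problem into the reach of Chebotarev-type density estimates.

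Next I would split the sieve at a threshold $y = y(x)$. For $\ell \le y$, Chebotarev applied to $K_\ell(q)$, together with Brun--Titchmarsh to handle the dependence on the conductor, controls the main terms unconditionally. For the harder range $\ell > y$, the three-base structure is what saves the day: if all of $q, r, s$ simultaneously fail to be primitive roots modulo the same prime $p$, then $p$ must split in the compositum $K_\ell(q)\,K_\ell(r)\,K_\ell(s)$. Multiplicative independence of $q, r, s$ forces these three Kummer layers to be generically linearly disjoint over $\mathbb{Q}(\zeta_\ell)$, so the simultaneous condition contributes an additional density factor of order $1/\ell^2$, enough to make the tail sum over $\ell$ converge.

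The role of the non-square hypotheses is precisely to prevent degenerate collapses of these extensions at small primes $\ell$. Excluding $q, r, s$ and $qrs$ from being squares keeps the quadratic fields $\mathbb{Q}(\sqrt{q}), \mathbb{Q}(\sqrt{r}), \mathbb{Q}(\sqrt{s})$ nontrivial and mutually distinct, so their compositum really has degree $8$ over $\mathbb{Q}$. The factors $-3qr, -3qs, -3rs$ intervene when $\ell = 3$: the cyclotomic field $\mathbb{Q}(\zeta_3) = \mathbb{Q}(\sqrt{-3})$ interacts with the quadratic subfields $\mathbb{Q}(\sqrt{qr})$ etc., and forbidding these three products from being squares ensures that the relevant Kummer compositum at the prime $3$ has its maximal expected degree, thereby preventing a spurious contribution to the error term.

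The main obstacle, and what forces this to be a theorem of Heath-Brown rather than a routine deduction, is the middle range of $\ell$, roughly $y < \ell < x^{1/2}$, where neither Chebotarev (too many fields, error terms blow up) nor trivial tail bounds (too weak) suffice. Overcoming this parity-type barrier requires a Selberg upper-bound sieve combined with careful character-sum estimates and Mertens-type manipulations in the tail; it is exactly here that having three independent bases (rather than one) breaks the unconditional deadlock, since the simultaneous-failure event is rare enough to be bounded by a double-sum over almost-primes. Reproducing this step faithfully requires the full technical apparatus of Heath-Brown's paper, and I would import it as a black box after verifying that the degree and disjointness computations in the previous paragraph hold under the stated hypotheses.
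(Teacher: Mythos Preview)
The paper does not prove this lemma at all: it is stated with the attribution ``[Heath-Brown, \cite{Heath-Brown}]'' and immediately followed by Corollary~\ref{primitive_root_heath_boron_2}, again without proof. Both are imported wholesale from Heath-Brown's 1986 paper as black boxes, and the present article only ever uses the corollary (in the proof of Corollary~\ref{THM-4Cor1}). So there is nothing in the paper for your proposal to be compared against; any proof you supply is strictly additional material.

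That said, your sketch is a fair high-level summary of the architecture of Heath-Brown's actual argument: the reformulation via splitting in $K_\ell(q)=\mathbb{Q}(\zeta_\ell,q^{1/\ell})$, the truncation of the inclusion--exclusion at a threshold $y$, the exploitation of three independent bases to gain an extra factor $1/\ell^2$ in the tail, and the identification of the non-square hypotheses as the conditions ensuring maximal degree of the relevant composita at $\ell=2,3$. Where your outline is thinnest is exactly where you say it is: the ``middle range'' is handled in Heath-Brown's paper not by a single Selberg sieve but by a rather delicate combination of sieve weights, a bilinear decomposition, and zero-density input for Dirichlet $L$-functions (via Chen--Iwaniec type switching). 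Calling this a black box is honest, but if you intend to present a proof rather than a citation, you would need to reproduce or at least reference those estimates precisely; as written, your proposal is a roadmap, not a proof. For the purposes of the present paper, simply citing \cite{Heath-Brown} is both what the authors do and all that is required.
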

	\begin{corollary}[Heath-Brown, \cite{Heath-Brown}]\label{primitive_root_heath_boron_2}
		There are at most two primes for which artin conjecture does not hold.
	\end{corollary}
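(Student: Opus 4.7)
The plan is to deduce Corollary \ref{primitive_root_heath_boron_2} from Lemma \ref{primitive_root_heath_boron_1} by a direct contradiction argument: I assume Artin's conjecture fails for three distinct primes and then feed these three primes as the triple $(q,r,s)$ into Heath-Brown's lemma.

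First I would suppose, toward a contradiction, that there exist three distinct primes $q,r,s$ for which Artin's conjecture fails, meaning each of $q,r,s$ is a primitive root modulo only finitely many primes $p$. Next I would verify that the triple $(q,r,s)$ meets the hypotheses of Lemma \ref{primitive_root_heath_boron_1}. Multiplicative independence is immediate from unique factorization: a relation $q^{a}r^{b}s^{c}=1$ with $a,b,c\in\mathbb{Z}$ forces, after clearing denominators and comparing $p$-adic valuations at the distinct primes $q,r,s$, the equalities $a=b=c=0$. For the squarefree hypotheses, note that $q$, $r$, $s$ are themselves single distinct primes and hence not squares, while $qrs$ is squarefree with three distinct prime factors and so is not a square either. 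The three remaining expressions $-3qr$, $-3qs$, $-3rs$ are negative integers and therefore cannot be perfect squares in $\mathbb{Z}$.

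With the hypotheses verified, Lemma \ref{primitive_root_heath_boron_1} supplies infinitely many primes $p$ such that at least one member of $\{q,r,s\}$ is a primitive root modulo $p$. By pigeonhole, some fixed element of $\{q,r,s\}$ is a primitive root modulo $p$ for infinitely many $p$, contradicting the standing assumption that Artin's conjecture fails at each of $q,r,s$. Hence at most two primes can fail Artin's conjecture.

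I do not expect any real obstacle here: the only point that requires a moment's thought is confirming the squarefree/non-square conditions of Heath-Brown's lemma, and those collapse to triviality once one observes that three distinct primes give a squarefree product and that the $-3$ factor makes the mixed expressions negative. The bulk of the work is already packaged inside the lemma itself.
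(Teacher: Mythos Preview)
The paper does not supply its own proof of this corollary; it simply cites Heath-Brown. Your argument is the standard deduction of the corollary from Lemma~\ref{primitive_root_heath_boron_1} and is correct as written: three distinct primes are automatically multiplicatively independent, none of them nor their product is a square, and the quantities $-3qr$, $-3qs$, $-3rs$ are negative and hence not squares in $\mathbb{Z}$, so the lemma applies and pigeonhole finishes the contradiction.
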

	The following two lemmas describe primitive roots modulo prime power.
	\begin{lemma}\label{primitilive_root_property1}
		If $p$ is an odd prime and  $g$ is primitive root modulo $p^2$, then $g$ is primitive root mod $p^k$ for each positive integer $k\geq 3$.
	\end{lemma}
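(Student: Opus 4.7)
The plan is to prove the statement by induction on $k\geq 2$, where the base case $k=2$ is the hypothesis. The crux is a lifting-the-exponent style computation that tracks, at each level $k$, the $p$-adic valuation of $g^{p^{k-1}(p-1)}-1$, and shows it is exactly $k$.

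First I would set up the stronger inductive claim: for every $k\geq 1$ one has $g^{p^{k-1}(p-1)}=1+a_k p^k$ with $p\nmid a_k$. The base case $k=1$ follows because Fermat gives $g^{p-1}\equiv 1\pmod p$, while the hypothesis that $g$ is a primitive root modulo $p^2$ forces $g^{p-1}\not\equiv 1\pmod{p^2}$ (otherwise the order of $g$ mod $p^2$ would divide $p-1<\varphi(p^2)$). Hence $g^{p-1}=1+a_1 p$ with $p\nmid a_1$.

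For the inductive step I would raise both sides to the $p$-th power and expand:
\begin{equation*}
g^{p^{k}(p-1)}=(1+a_k p^k)^p=1+p\cdot a_k p^k+\binom{p}{2}a_k^2 p^{2k}+\sum_{j=3}^{p}\binom{p}{j}a_k^j p^{jk}.
\end{equation*}
The first nontrivial term is $a_k p^{k+1}$. Here is where the hypothesis that $p$ is \emph{odd} enters: the binomial coefficient $\binom{p}{2}=\tfrac{p(p-1)}{2}$ contributes a factor of $p$, so the second term has $p$-adic valuation $2k+1\geq k+2$, and the remaining terms have valuation at least $3k\geq k+2$. Thus $g^{p^{k}(p-1)}\equiv 1+a_k p^{k+1}\pmod{p^{k+2}}$, which exhibits $a_{k+1}\equiv a_k\pmod p$, preserving $p\nmid a_{k+1}$.

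Finally, to deduce that $g$ is a primitive root modulo $p^k$ for all $k\geq 3$, I would run a second induction: assuming $g$ is a primitive root modulo $p^k$, the order $d$ of $g$ modulo $p^{k+1}$ divides $\varphi(p^{k+1})=p^k(p-1)$ and is divisible by $\varphi(p^k)=p^{k-1}(p-1)$ (reducing mod $p^k$). So $d\in\{p^{k-1}(p-1),\,p^k(p-1)\}$, and the former is excluded by the claim above, since $g^{p^{k-1}(p-1)}=1+a_k p^k$ with $p\nmid a_k$ is not $\equiv 1\pmod{p^{k+1}}$. Hence $d=\varphi(p^{k+1})$, as required. The only delicate point is the valuation bookkeeping in the binomial expansion, and in particular noticing that oddness of $p$ is exactly what promotes $\binom{p}{2}p^{2k}$ past the threshold $p^{k+2}$ needed to isolate the leading $a_k p^{k+1}$ term.
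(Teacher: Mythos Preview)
Your proof is correct and is essentially the standard textbook argument; the paper itself does not prove this lemma but simply cites \cite[p.~102]{niven}, where exactly this lifting-the-exponent induction (tracking $g^{p^{k-1}(p-1)}=1+a_kp^k$ with $p\nmid a_k$ via the binomial expansion) is carried out.
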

	\begin{lemma}\label{primitive_root_property2}
		Let $g$ be a natural number and $p$ be  a prime such that $g$ is   primitive root modulo $p$ and $g$ is not  primitive root modulo $p^2$. Then $g+tp$ is primitive root modulo $p^2$ for every $t$ with $t\not \equiv 0\pmod{p}$.
	\end{lemma}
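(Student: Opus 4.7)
The plan is to use the standard characterization that, for a prime $p$ and an integer $g$ that is a primitive root modulo $p$, $g$ fails to be a primitive root modulo $p^2$ precisely when $g^{p-1}\equiv 1\pmod{p^2}$. This follows because the order of $g$ modulo $p^2$ divides $\phi(p^2)=p(p-1)$ and must be a multiple of the order modulo $p$, which is $p-1$, so the order modulo $p^2$ is either $p-1$ or $p(p-1)$. Under the hypothesis of the lemma, we therefore have the identity $g^{p-1}\equiv 1\pmod{p^2}$ available as a starting point.

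Now let $h=g+tp$ with $t\not\equiv 0\pmod{p}$. Since $h\equiv g\pmod{p}$, the element $h$ is still a primitive root modulo $p$, so by the same dichotomy its order modulo $p^2$ is either $p-1$ or $p(p-1)$. Hence it suffices to show $h^{p-1}\not\equiv 1\pmod{p^2}$. I would prove this by expanding
\[
 h^{p-1}=(g+tp)^{p-1}\equiv g^{p-1}+(p-1)g^{p-2}\,tp\pmod{p^2},
\]
since every further term in the binomial expansion carries a factor $p^2$. Using $g^{p-1}\equiv 1\pmod{p^2}$ and $p-1\equiv -1\pmod{p}$, this simplifies to
\[
 h^{p-1}\equiv 1-tp\,g^{p-2}\pmod{p^2}.
\]
Because $\gcd(g,p)=1$ (as $g$ is a primitive root modulo $p$) and $t\not\equiv 0\pmod{p}$, the quantity $tg^{p-2}$ is a unit modulo $p$, so $tp\,g^{p-2}\not\equiv 0\pmod{p^2}$, which gives the desired non-congruence.

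Combining these steps, the order of $h$ modulo $p^2$ cannot be $p-1$, so it must be $p(p-1)=\phi(p^2)$; that is, $h=g+tp$ is a primitive root modulo $p^2$, as required. The only real step requiring care is the binomial expansion and controlling the higher-order terms; everything else is a clean application of Lagrange's order argument. If one wanted to also cover $p=2$ uniformly, the formula $h^{p-1}\equiv 1-tp\,g^{p-2}\pmod{p^2}$ remains valid (with $g^0=1$) and still produces a nontrivial residue, so no separate case analysis is needed.
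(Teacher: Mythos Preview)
Your argument is correct and is precisely the standard proof of this fact. The paper does not supply its own proof of this lemma; it simply cites Niven--Zuckerman--Montgomery \cite[p.~102]{niven}, whose argument is essentially the one you have written (the order dichotomy $d\in\{p-1,p(p-1)\}$ together with the binomial expansion of $(g+tp)^{p-1}$ modulo $p^2$).
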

	\noindent
	Proof of the above two lemmas is given in the book \cite[Page 102]{niven}.
	
	\subsection{Base representation of real number}
	\begin{lemma}[\cite{carather}, page 8]\label{thm_lem_1}
		Let $\alpha\in [0,1]_{\mathbb{R}}$ and $q$ be an integer such that $q\geq 2$. Then there exists a sequence of integers $(a_n)_{n=1}^{\infty}$ with $0\leq a_n\leq q-1$ for each $n$ such that
		$\alpha =\sum_{n=1}^{\infty}\frac{a_n}{q^n}$.
	\end{lemma}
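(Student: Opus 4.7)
The plan is to produce the digits $a_n$ by the standard greedy algorithm and verify convergence.

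First I would dispense with the boundary case $\alpha = 1$ by choosing $a_n = q-1$ for every $n$; the geometric series identity
\[
\sum_{n=1}^{\infty}\frac{q-1}{q^n}=(q-1)\cdot\frac{1/q}{1-1/q}=1
\]
shows this works.

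For $\alpha\in[0,1)_{\mathbb{R}}$, I would define a pair of sequences $(a_n)_{n\ge 1}$ and $(\alpha_n)_{n\ge 0}$ inductively. Set $\alpha_0=\alpha$, and for $n\ge 1$ put
\[
a_n=\lfloor q\alpha_{n-1}\rfloor,\qquad \alpha_n=q\alpha_{n-1}-a_n.
\]
A short induction on $n$ shows $\alpha_n\in[0,1)_{\mathbb{R}}$: indeed $\alpha_0\in[0,1)_{\mathbb{R}}$ by assumption, and if $\alpha_{n-1}\in[0,1)_{\mathbb{R}}$ then $q\alpha_{n-1}\in[0,q)_{\mathbb{R}}$, so its fractional part $\alpha_n$ lies in $[0,1)_{\mathbb{R}}$ and $a_n=\lfloor q\alpha_{n-1}\rfloor\in\{0,1,\dots,q-1\}$, giving the required digit bounds.

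Next I would prove by induction on $N$ the identity
\[
\alpha=\sum_{n=1}^{N}\frac{a_n}{q^n}+\frac{\alpha_N}{q^N}.
\]
The base case $N=0$ is the definition $\alpha=\alpha_0$, and the inductive step follows from $\alpha_{N-1}=\dfrac{a_N+\alpha_N}{q}$, which rearranges the definition of $a_N,\alpha_N$. Since $0\le \alpha_N/q^N<1/q^N\to 0$ as $N\to\infty$, letting $N\to\infty$ yields $\alpha=\sum_{n=1}^{\infty}a_n/q^n$, as required. There is no real obstacle; the only subtle point is the separation of $\alpha=1$ from $\alpha<1$, which is necessary because $q\alpha=q$ would produce the forbidden digit $a_1=q$ at the first step of the greedy construction.
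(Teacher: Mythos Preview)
Your proof is correct and is the standard greedy construction of the base-$q$ expansion. The paper does not actually supply a proof of this lemma; it simply cites Carothers' \emph{Real Analysis} (page~8) for the result, so there is no in-paper argument to compare against. Your treatment, including the separate handling of $\alpha=1$, is precisely the argument one would expect to find in such a reference.
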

	\subsection{Euler's totient function}
	\begin{definition}\label{totient_function}
		For every $n\in \mathbb{N}$, $\phi(n)$ denotes the cardinality of the set $\{x\in [1,n]: \gcd(x,n)=1\}$. Then  the map $\phi:\mathbb{N} \rightarrow \mathbb{N}$ is called Euler's totient function.
	\end{definition}
	The following property of totient function $\phi$ can be obtained using the fact observed in \cite[Lemma 2.5, page 6]{totient}.
	\begin{lemma}\label{totient_property1}
		$\displaystyle\liminf_{n\rightarrow \infty}\dfrac{\phi(n)}{n}=0$
	\end{lemma}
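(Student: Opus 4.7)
The plan is to exhibit an explicit subsequence $(n_k)_{k\ge 1}$ of natural numbers along which $\phi(n_k)/n_k$ tends to $0$; this is clearly enough to force $\liminf_{n\to\infty}\phi(n)/n=0$. The canonical choice is the primorial sequence $n_k=p_1p_2\cdots p_k$, where $p_i$ denotes the $i$-th prime.

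First I would recall (either from the cited Lemma 2.5 of \cite{totient} or directly from multiplicativity of $\phi$ together with the evaluation $\phi(p^a)=p^{a-1}(p-1)$) the product formula
\[
\frac{\phi(n)}{n}=\prod_{p\mid n}\left(1-\frac{1}{p}\right).
\]
Applied to $n_k$ this yields $\phi(n_k)/n_k=\prod_{i=1}^{k}(1-1/p_i)$, which reduces the lemma to showing this finite product tends to zero along $k$.

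Next I would take logarithms and use the elementary bound $\log(1-x)\le -x$ valid for $x\in[0,1)$, giving
\[
\log\prod_{i=1}^{k}\left(1-\frac{1}{p_i}\right)\le -\sum_{i=1}^{k}\frac{1}{p_i}.
\]
By Euler's classical theorem on the divergence of the sum of reciprocals of the primes, $\sum_{i=1}^{\infty}1/p_i=\infty$, so the right-hand side tends to $-\infty$ as $k\to\infty$. Therefore $\phi(n_k)/n_k\to 0$, and the claim follows.

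The main (and essentially only) obstacle is the invocation of $\sum 1/p=\infty$, which is a standard foundational result and does not need to be reproved here. If one wished to sidestep even that, it suffices to observe that for each fixed $\varepsilon>0$ one can pick $k$ large enough that $\prod_{i=1}^{k}(1-1/p_i)<\varepsilon$, since if the product were bounded below by some $c>0$ then by Abel summation the partial sums $\sum 1/p_i$ would be bounded, contradicting the well-known growth of $\sum_{p\le x}1/p\sim\log\log x$.
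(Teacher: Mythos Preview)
Your argument is correct: taking $n_k$ to be the $k$-th primorial and using the product formula $\phi(n)/n=\prod_{p\mid n}(1-1/p)$ together with the divergence of $\sum_p 1/p$ is the standard way to show $\phi(n_k)/n_k\to 0$, which immediately gives the $\liminf$ statement.

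As for comparison with the paper: the paper does not actually prove this lemma. It simply records it as a consequence of a fact observed in \cite[Lemma~2.5]{totient} and moves on. So your proposal supplies strictly more than the paper does here. The approach you chose is the natural self-contained one; the only external ingredient is Euler's theorem on $\sum 1/p$, which is entirely standard. Your closing remark about ``sidestepping'' that ingredient is circular as written (you invoke $\sum_{p\le x}1/p\sim\log\log x$, which is a sharpening of the very divergence you are trying to avoid), but this is an optional aside and does not affect the validity of the main argument.
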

	\subsection{Union of disjoint arithmetic progressions}
	\begin{definition}[Arithmetic progression]
		Let $n\in \mathbb{N}$. Then the sequence $(a_i)_{i=0}^{n-1}$ in $\mathbb{N}$ is called $n$-term arithmetic progression if $a_i=x+id$ for some positive integer $x$ and  integer $d$. Also the sequence $(a_i)_{i=0}^{\infty}$ in $\mathbb{N}$ is called infinite arithmetic progression if $a_i=x+id$ for some positive integer $x$ and  integer $d$.
	\end{definition}
	\begin{definition}[UDAP set]
		Let $A$ be a nonempty set of natural numbers. Then the set $A$  is said to be an UDAP set if it can be written as the union of disjoint infinite arithmetic progressions.
	\end{definition}
	It is  easy to check  that an infinite arithmetic progressions is UDAP set but the union of an  infinite  arithmetic progression and a finite set of integers may not be an UDAP set. In our proofs of main theorems, we are using a special type of UDAP set which is described below in the form of $	\mathbb{M}(q,(a_j)_{j=1}^{\infty})$ for $q\in \mathbb{N}\setminus\{1\}$ and sequence $(a_j)_{j=1}^{\infty}$ in $[0,q-1]$:
	
	\begin{definition}\label{main_UDAP_set}
		Let $q\in \mathbb{N}\setminus\{1\}$,   and $(a_n)_{n=1}^{\infty}$ be a sequence in $[0,q-1]$. Then the set $\mathbb{M}(q,(a_j)_{j=1}^{\infty})$ is defined in the following way:
		\begin{equation*}
			\mathbb{M}(q,(a_j)_{j=1}^{\infty}):=\bigcup_{i=1}^{\infty}(X_i\setminus \{0\}), \text{ where }   	X_n=\bigcup_{i=0}^{a_n-1} \left(\sum_{j=0}^{n-2} a_{j+1}q^j+ iq^{n-1} +q^n\mathbb{N}_0\right) ~\forall~ n\in \mathbb{N}
		\end{equation*} 
		
	\end{definition}
	\noindent
	The following lemma guarantees that density of the set of the form $\mathbb{M}(q,(a_j)_{j=1}^{\infty})$ exists.
	\begin{lemma}\label{density_union_of_ap}
		Let $q\in \mathbb{N}\setminus \{1\}$ and $(a_i)_{i=1}^{\infty}$ be a sequence in $[0,q-1]$. Let $(X_i)_{i=1}^{\infty}$ be a sequence of infinite subsets of  $\mathbb{N}$ such that $X_i$ is union of $a_i$ disjoint infinite arithmetic progressions having common difference $q^i$ for each $i\in \mathbb{N}$,    $X_{j}\cap X_i=\varnothing$ for $i\neq j$, and  $\bigcup_{i=m+1}^{\infty}X_{i}$ is contained in an arithmetic progression having common differences $q^m$ for each $m\in \mathbb{N}$. Then density of the set $\bigcup_{i=1}^{\infty}X_i$ exists and equal to $\sum_{i=1}^{\infty}\frac{a_i}{q^i}$.
	\end{lemma}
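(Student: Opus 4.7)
The plan is to decompose the union into a finite head plus a tail, compute the density of the head exactly, bound the density of the tail by the hypothesis about containment in a single arithmetic progression, and let the truncation parameter go to infinity.

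First I would set $X:=\bigcup_{i=1}^{\infty}X_i$ and, for each $m\in\mathbb{N}$, write $X=Y_m\sqcup Z_m$ where $Y_m:=\bigcup_{i=1}^{m}X_i$ and $Z_m:=\bigcup_{i=m+1}^{\infty}X_i$ (these are disjoint by the pairwise disjointness hypothesis). Since an infinite arithmetic progression of common difference $d$ has density $1/d$, and $X_i$ is the disjoint union of $a_i$ such progressions with common difference $q^i$, the density of $X_i$ exists and equals $a_i/q^i$. Because $Y_m$ is a finite disjoint union of the $X_i$ for $i\leq m$, its density exists and
\[
d(Y_m)=\sum_{i=1}^{m}\frac{a_i}{q^i}.
\]

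Next I would use the tail hypothesis: $Z_m$ is contained in a single arithmetic progression of common difference $q^m$, which has density $1/q^m$, so $\overline{d}(Z_m)\leq 1/q^m$. Using $Y_m\subseteq X\subseteq Y_m\cup Z_m$ and the subadditivity of upper density, I get
\[
\sum_{i=1}^{m}\frac{a_i}{q^i}=d(Y_m)\leq \underline{d}(X)\leq \overline{d}(X)\leq d(Y_m)+\overline{d}(Z_m)\leq \sum_{i=1}^{m}\frac{a_i}{q^i}+\frac{1}{q^m}.
\]
The series $\sum_{i=1}^{\infty}a_i/q^i$ converges since $0\leq a_i\leq q-1$ forces it to be bounded above by the geometric series $\sum(q-1)/q^i=1$. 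Letting $m\to\infty$ in the displayed inequality, both outer bounds converge to $\sum_{i=1}^{\infty}a_i/q^i$, so $\underline{d}(X)=\overline{d}(X)=\sum_{i=1}^{\infty}a_i/q^i$, which is the claim.

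There is no real obstacle; the only point that needs a moment's care is verifying that the upper density is subadditive on disjoint unions (which follows directly from $|A\cup B\cap[1,n]|=|A\cap[1,n]|+|B\cap[1,n]|$ when $A\cap B=\varnothing$) and that the containment of $Z_m$ in a single arithmetic progression of common difference $q^m$ forces $\overline{d}(Z_m)\leq 1/q^m$. Both are immediate from the definition of density, so the whole argument reduces to a clean sandwich estimate together with the convergence of a geometric-type series.
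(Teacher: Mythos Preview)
Your proof is correct and follows essentially the same approach as the paper: split the union into a finite head whose density is exactly $\sum_{i=1}^m a_i/q^i$ and a tail contained in a single arithmetic progression of density $1/q^m$, sandwich $\underline{d}(X)$ and $\overline{d}(X)$ between $\sum_{i=1}^m a_i/q^i$ and $\sum_{i=1}^m a_i/q^i + 1/q^m$, and let $m\to\infty$. The only cosmetic difference is that you obtain the lower bound via the monotonicity $Y_m\subseteq X\Rightarrow d(Y_m)\le\underline{d}(X)$, whereas the paper phrases it as additivity of $\underline{d}$ on the disjoint union $Y_m\sqcup Z_m$; both are immediate.
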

	\begin{proof}
		Let $A=\bigcup_{i=1}^{\infty}X_i$. Since $X_i$ is union of $a_i$ disjoint infinite arithmetic progressions having common difference $q^i$, we get that 
		$d(X_i)=\frac{a_i}{q^i}$ for every  positive integer $n$.
		Using the fact that $X_i$'s are pairwise disjoint, and $\bigcup_{i=m+1}^{\infty}X_{i}$ is contained in an arithmetic progression having common differences $q^n$ for each $n\in \mathbb{N}$, we have
		$$\overline{d}(A)=\overline{d}\left(\bigcup_{i=1}^{\infty}X_i\right)\leq \overline{d}\left(\displaystyle\bigcup_{i=1}^{n}X_i\right)+\overline{d}\left(\displaystyle\bigcup_{i=n+1}^{\infty}X_i\right)\leq \sum_{i=1}^{n}\frac{a_i}{q^i}+ \frac{1}{q^n}, ~\forall~ n\in \mathbb{N} $$
		and
		$$\underline{d}(A)=\underline{d}\left(\bigcup_{i=1}^{\infty}X_i\right)\geq \underline{d}\left(\displaystyle\bigcup_{i=1}^{n}X_i\right)+\underline{d}\left(\displaystyle\bigcup_{i=n+1}^{\infty}X_i\right)\geq \sum_{i=1}^{n}\frac{a_i}{q^i} ~\forall~ n\in \mathbb{N}.  $$
		Combining the above two equations, we get that for every positive integers $n$,
		$$\sum_{i=1}^{n}\frac{a_i}{q^i}\leq \underline{d}(A)\leq \overline{d}(A)\leq  \sum_{i=1}^{n}\frac{a_i}{q^i}+ \frac{1}{q^n}.$$
		Thus, by taking limit as $n\rightarrow \infty$ in this equation, we obtain that
		$	d(A)=\sum_{i=1}^{\infty}\frac{a_i}{q^i}$.
	\end{proof}
	\noindent
	By using the above lemma, we get the density of the set of the forms $\mathbb{M}(q,(a_j)_{j=1}^{\infty})$, which is given in the corollary given below.
	\begin{corollary}\label{density_union_ap_corollary}
		Let $q\in \mathbb{N}\setminus\{1\}$   and $(a_n)_{n=1}^{\infty}$ be a sequence in $[0,q-1]$. Then density of the set $ \mathbb{M}(q,(a_j)_{j=1}^{\infty})$ exists and equals to $\sum_{i=1}^{\infty}\frac{a_i}{q^i}$.
	\end{corollary}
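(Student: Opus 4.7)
The plan is to apply Lemma \ref{density_union_of_ap} directly to the family $(X_n)_{n=1}^{\infty}$ prescribed in Definition \ref{main_UDAP_set}, so the task reduces to verifying the three hypotheses of that lemma and then noting that excising the single point $\{0\}$ from each $X_n$ is harmless. The unifying observation I would use is that for an element $x \in X_n$, written as
$$x = \sum_{j=0}^{n-2} a_{j+1}q^j + i q^{n-1} + k q^n \qquad (0 \le i \le a_n-1,\ k \in \mathbb{N}_0),$$
the base-$q$ digits of $x$ at positions $0, 1, \dots, n-2$ are exactly $a_1, a_2, \dots, a_{n-1}$, the digit at position $n-1$ is $i < a_n$, and the higher digits are arbitrary (coming from $k$). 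Every disjointness claim can then be read off from which digit position carries the distinguishing value.

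First I would verify hypothesis (i), that $X_n$ is a disjoint union of $a_n$ infinite APs of common difference $q^n$: the $a_n$ starting points $c_n + i q^{n-1}$ with $c_n = \sum_{j=0}^{n-2} a_{j+1} q^j$ and $0 \le i \le a_n-1$ are pairwise incongruent modulo $q^n$, because $0 \le (i-i')q^{n-1} < q^n$ for $i \ne i'$ in $[0, a_n-1]$. Next I would verify pairwise disjointness across levels: if $m < n$, then any $y \in X_m$ has its $q^{m-1}$-digit strictly less than $a_m$, whereas any $x \in X_n$ has its $q^{m-1}$-digit equal to $a_m$; hence $X_m \cap X_n = \varnothing$. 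Finally, for the nesting hypothesis, I would show that for every $n > m$ the entire $X_n$ sits in the single residue class $\sum_{j=0}^{m-1} a_{j+1} q^j \pmod{q^m}$: indeed, splitting the defining sum as $\sum_{j=0}^{m-1} a_{j+1} q^j + \sum_{j=m}^{n-2} a_{j+1} q^j + i q^{n-1} + k q^n$, every summand after the first is divisible by $q^m$. Thus $\bigcup_{n > m} X_n$ is contained in one AP with common difference $q^m$, as required.

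With all three hypotheses in place, Lemma \ref{density_union_of_ap} yields that $d\bigl(\bigcup_{n=1}^{\infty} X_n\bigr)$ exists and equals $\sum_{i=1}^{\infty} a_i/q^i$. Since $\mathbb{M}(q,(a_j)_{j=1}^{\infty})$ differs from $\bigcup_n X_n$ only by removal of the point $0$ (a set of density zero), the density is unchanged and we get the claimed value.

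I do not anticipate a genuine obstacle: the entire content is the bookkeeping of base-$q$ digits, and once the digit-at-position-$(m-1)$ observation is in hand, disjointness and the residue containment are immediate. The only mild subtlety worth double-checking is the edge case $n=1$, where the empty sum $\sum_{j=0}^{-1}$ is zero, so $X_1 = \bigcup_{i=0}^{a_1-1}(iq^0 + q\mathbb{N}_0) = \bigcup_{i=0}^{a_1-1}(i + q\mathbb{N}_0)$, which fits the same pattern and causes no trouble.
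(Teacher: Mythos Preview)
Your proposal is correct and follows exactly the approach the paper intends: the corollary is stated immediately after Lemma \ref{density_union_of_ap} as a direct consequence, and you have simply supplied the verification of that lemma's hypotheses (disjointness of the $a_n$ progressions within $X_n$, pairwise disjointness of the $X_n$'s via the $(m{-}1)$st base-$q$ digit, and containment of the tail $\bigcup_{n>m}X_n$ in a single residue class modulo $q^m$) that the paper leaves implicit. The only cosmetic point is that the lemma is phrased for subsets of $\mathbb{N}$ while the $X_n$ lie in $\mathbb{N}_0$ and may be empty when $a_n=0$, but your remark about excising $\{0\}$ handles the first, and the second is harmless since the lemma's proof goes through verbatim with $d(\varnothing)=0$.
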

	\subsection{Primes in arithmetic progresssions}
	\begin{lemma}[Dirichlet's Theorem  \cite{hardy}]\label{dirichlet}
		Let $a$ and $b$ be positive integers with no common divisor except 1. Then there are infinitely many primes of
		the form $an+b$.
		
	\end{lemma}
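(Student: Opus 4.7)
The plan is to follow the classical analytic proof due to Dirichlet, based on $L$-functions attached to characters modulo $a$. First I would introduce the group of Dirichlet characters $\chi \colon (\mathbb{Z}/a\mathbb{Z})^\times \to \mathbb{C}^\times$, extended to all of $\mathbb{Z}$ by $\chi(n)=0$ when $\gcd(n,a)>1$. The orthogonality relation
\[
\frac{1}{\phi(a)}\sum_{\chi} \overline{\chi(b)}\,\chi(n) = \begin{cases} 1 & \text{if } n\equiv b \pmod a,\\ 0 & \text{otherwise,}\end{cases}
\]
is the device that lets one isolate the residue class $b \pmod a$ inside any sum over integers coprime to $a$.

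Next I would introduce the Dirichlet $L$-function $L(s,\chi) = \sum_{n=1}^{\infty} \chi(n) n^{-s}$, which converges absolutely for $\Re(s)>1$ and has the Euler product $L(s,\chi) = \prod_p (1-\chi(p)p^{-s})^{-1}$. Taking logarithms and expanding yields $\log L(s,\chi) = \sum_p \chi(p)p^{-s} + O(1)$ as $s\to 1^+$, where the $O(1)$ absorbs contributions from prime powers $p^k$ with $k\geq 2$. Combining this with orthogonality produces the key identity
\[
\sum_{p \equiv b \pmod a} \frac{1}{p^s} \;=\; \frac{1}{\phi(a)}\sum_{\chi} \overline{\chi(b)}\, \log L(s,\chi) \;+\; O(1), \qquad s\to 1^{+}.
\]
The principal character $\chi_0$ contributes a term behaving like $\log\zeta(s) \sim \log\frac{1}{s-1}$, which diverges as $s\to 1^+$. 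To push this divergence onto the left-hand side, I need each non-principal $\chi$ to contribute only $O(1)$; equivalently, I need $L(s,\chi)$ to extend analytically to a neighborhood of $s=1$ with $L(1,\chi)\neq 0$.

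The analytic continuation past $s=1$ for non-principal $\chi$ is routine: since $\bigl|\sum_{n\leq N}\chi(n)\bigr|$ is bounded by $\phi(a)$, Abel summation shows the $L$-series converges for $\Re(s)>0$. The genuine obstacle, and the technical heart of Dirichlet's theorem, is the nonvanishing $L(1,\chi)\neq 0$. For complex (non-real) characters I would examine the product $\zeta_a(s):=\prod_{\chi} L(s,\chi)$, a Dirichlet series with nonnegative coefficients that fails to converge throughout $\Re(s)\leq 1-\tfrac{1}{\phi(a)}$; if some $L(1,\chi)$ with $\chi\neq\overline{\chi}$ vanished, it would cancel the simple pole of $L(s,\chi_0)$ and force $\zeta_a$ to be entire, contradicting the divergence. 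The real quadratic case is the delicate one; I would handle it either by Dirichlet's class-number formula, which expresses $L(1,\chi)$ as an essentially positive quantity attached to a quadratic field, or by the slick contradiction argument of Mertens/de la Vallée Poussin. Once $L(1,\chi)\neq 0$ has been secured for every non-principal $\chi$, the identity above forces $\sum_{p\equiv b \pmod a} p^{-s}\to\infty$ as $s\to 1^+$, and hence there must be infinitely many primes in the progression $an+b$.
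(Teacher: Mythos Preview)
Your outline is a correct sketch of the classical analytic proof of Dirichlet's theorem via $L$-functions, orthogonality of characters, and the nonvanishing $L(1,\chi)\neq 0$ for non-principal $\chi$. However, the paper does not prove this lemma at all: it is stated in the preliminary section purely as a quoted result, with the citation \cite{hardy} (Hardy and Wright) serving as the proof. So there is nothing to compare; your proposal supplies a genuine argument where the paper simply invokes a standard reference, which is the appropriate choice given that Dirichlet's theorem is used only once (in the proof of Theorem~\ref{Thm-3}\ref{THM-3Part2}) and its proof is far afield from the paper's combinatorial concerns.
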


	\section{Proof of Theorem \ref{Thm-1}}\label{section3}
	\begin{proof}[Proof of Theorem \ref{Thm-1}]
		Let $S=\{b_i: i\in \mathbb{N}\}$ such that $b_i<b_{i+1}$ for $i\in \mathbb{N}$. Let $X$ be any arbitrary subset of $\mathbb{N}$. We construct  set $A$ in the following way:
		\begin{align*}
			A_1&= (X\cup (b_1-X) )\cap \mathbb{N}\\
			A_n&=\left(\{b_1,b_2,\cdots, b_n\}-\left(\bigcup_{i=1}^{n-1} A_i\right)\right)\cap \mathbb{N} ~\forall~ n\in \mathbb{N}\\
			A&=\bigcup_{i=1}^{\infty}A_i.
		\end{align*}
		
		To prove $(A+(\mathbb{N}\setminus A))\cap S=\emptyset$, it is sufficient to prove that if $b_{i}-x >0$ for $x \in A$ and $i\in \mathbb{N}$, then $b_{i}-x \in A$. Proof of this given below.
		
		Assume  $x\in A$ and $i\in \mathbb{N}$ such that $b_{i}-x>0$. Then there exists $j\in \mathbb{N}$ such that $x\in A_j$. This implies that $b_i-x\in A_{l+1}$ where $l=\max\{i,j\}$. Therefore $b_i-x\in A$. This completes the proof the the theorem.
	\end{proof}
	\noindent 
	As a corollary to Theorem \ref{Thm-1}, we obtain  a set $A$ with density zero so that $A+(\mathbb{N}\setminus A)$ misses a geometric progression. Corollary \ref{not_additive_complement_density_zero} gives detail regarding this observation.
	\begin{corollary}\label{not_additive_complement_density_zero}
		Let $g\in \mathbb{N}\setminus \{1\}$ and  $S$ be the set of all elements of the form $g^i$ with $i\in \mathbb{N}$. Then there exists a set $A$, a subset of $\mathbb{N}$, such that $d(A)=0$ and  $(A+(\mathbb{N}\setminus A))\cap S=\varnothing$.
	\end{corollary}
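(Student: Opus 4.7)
The plan is to apply Theorem \ref{Thm-1} to $S = \{g^i : i \in \mathbb{N}\}$, so that $b_i = g^i$ in the notation of that theorem's proof, and to pick the free seed set $X$ in its construction so that the resulting $A$ has density zero. The avoidance condition $(A + (\mathbb{N} \setminus A)) \cap S = \varnothing$ then comes for free from Theorem \ref{Thm-1}, and only the density estimate needs verification.

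The simplest admissible choice is $X = \varnothing$: an immediate induction on the recursion $A_1 = (X \cup (b_1 - X)) \cap \mathbb{N}$ and $A_n = (\{b_1, \ldots, b_n\} - \bigcup_{i<n} A_i) \cap \mathbb{N}$ forces $A_n = \varnothing$ for every $n$ (since subtracting from an empty set yields the empty set), whence $A = \varnothing$ and $d(A) = 0$ trivially. For $g \geq 3$ one can instead furnish a non-empty witness by taking $X = \{1\}$: a short induction shows that $A^{(n)} := \bigcup_{i \leq n} A_i$ satisfies $A^{(n)} = A^{(n-1)} \cup \sigma_n(A^{(n-1)})$ with $\sigma_n(a) := g^n - a$, because the would-be contributions from $b_j - A^{(n-1)}$ with $j < n$ already lie in $A^{(n-1)}$ thanks to the closure under the earlier involutions built up at previous stages. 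Consequently $A^{(n)} \subseteq [1, g^n - 1]$ and $|A^{(n)}| \leq 2^n$, while any stage $m > n$ produces only $\sigma_m$-images lying in $[g^m - g^{m-1} + 1, g^m - 1]$, which for $g \geq 3$ is strictly above $g^n$. Hence $|A \cap [1, g^n]| \leq 2^n$, and $d(A) \leq \lim_n (2/g)^n = 0$.

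The one genuine issue is to control how much later stages may leak into $[1, g^n]$; in the trivial version $X = \varnothing$ there is nothing to control, and in the nontrivial version the crux is the window $[g^m - g^{m-1} + 1, g^m - 1]$ where stage-$m$ newcomers live, which stays cleanly above $g^n$ precisely when $g \geq 3$. For $g = 2$ this window degenerates and the doubling saturates the interval $[1, 2^n]$ (the orbit of any nonzero anchor has density at least $1/2$), so one falls back on the trivial construction. Either way the statement of the corollary is secured.
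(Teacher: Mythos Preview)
Your argument is correct and, for $g\ge 3$ with seed $X=\{1\}$, it is exactly the paper's proof: the paper also shows the recursion $A_n=A_{n-1}\cup(g^n-A_{n-1})$, deduces $|A_n|=2^n$, and concludes $d(A)=0$ from the bound $|A\cap[1,n]|\le 2\cdot 2^{\log_g n}$.

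Your proposal is in one respect more complete than the paper's. The corollary is stated for all $g\in\mathbb{N}\setminus\{1\}$, but the paper's proof begins ``Let $g$ be a positive integer such that $g\ge 3$'' and never returns to $g=2$; your observation that the seed $X=\varnothing$ forces $A=\varnothing$, which vacuously satisfies both $d(A)=0$ and $(A+(\mathbb{N}\setminus A))\cap S=\varnothing$, plugs that gap (and, as you note, the $X=\{1\}$ construction genuinely fails for $g=2$, since then $A$ is the set of odd numbers). One small quibble: the reason the $g=2$ case collapses is not that the window $[g^m-g^{m-1}+1,\,g^m-1]$ fails to sit above $g^n$ --- it still does --- but simply that $(2/g)^n$ no longer tends to $0$.
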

	\begin{proof}
		Let $g$ be a positive integer such that  $g\geq 3$. Let $S=\{g^i:i\in \mathbb{N}\}$ and $X=\{1\}$. Using the construction in the proof of Theorem \ref{Thm-1}, we get the following set $A$ such that $A+\mathbb{N}\setminus A$ does not contain  the infinite set $S$:
		\begin{align*}
			A_1&=X\cup (\{g\}-X)\cap \mathbb{N}=\{1,g-1\},\\
			A_n&=\left(\{g,g^2,\cdots, g^n\}-\bigcup_{i=1}^{n-1}A_i\right) \cap \mathbb{N}~ \forall ~  n\geq2,\\
			A&=\bigcup_{i=1}^{\infty}A_i.
		\end{align*}
		Note that \begin{equation}
			A_n\subset A_{n+1} ~\forall ~n\in \mathbb{N}.\label{Ex_densityzero_not_disjoint_additive_complement_eq0}
		\end{equation} We also
		observe that  for each $n\in \mathbb{N}$, $A_n$ and $(g^{n+1}-A_n)$ are disjoint because $\max(A_n)=g^n-1<\min(g^{n+1}-A_n)$. By induction, we shall now show that  \begin{equation}\label{Ex_densityzero_not_disjoint_additive_complement_eq1}
			A_n= A_{n-1}\cup (g^n-A_{n-1}) ~\forall~ n\geq2.
		\end{equation}
		
		It is easy to observe that the equation \eqref{Ex_densityzero_not_disjoint_additive_complement_eq1} holds for $n=2$.
		Suppose that the equation \eqref{Ex_densityzero_not_disjoint_additive_complement_eq1} holds for each $1<n\leq k$. To show the validity of the equation \eqref{Ex_densityzero_not_disjoint_additive_complement_eq1} for each natural number $n>1$, using the way of induction, we need to show that
		$$A_{k+1}= A_{k}\cup (g^{k+1}-A_{k}).$$
		
		Now applying  the expression \eqref{Ex_densityzero_not_disjoint_additive_complement_eq0} in the definition of $A_{k+1}$, we get
		\begin{align*}
			A_{k+1} & = (\{g,g^2,\cdots, g^{k+1}\}-A_k)\cap \mathbb{N}\\
			& = \left((\{g,g^2,\cdots, g^{k}\}-A_{k-1})\cup (\{g,g^2,\cdots, g^{k}\}-(A_k\setminus A_{k-1})) \cup ( g^{k+1}-A_k)\right)\cap \mathbb{N}.
		\end{align*}
		Since $A_k$ and $(g^{k+1}-A_k)$ are disjoint, we get the following equation by  applying induction hypothesis in the above equation.
		\begin{align*}
			A_{k+1} &=  (A_k \cup (\{g,g^2,\cdots, g^{k}\}-(g^k-A_{k-1}))\cup (g^{k+1}-A_k))\cap \mathbb{N}\\
			& =  A_k \cup (g^{k+1}-A_k) \cup \left(\left(\bigcup_{i=1}^{k-1}(g^i-g^k+A_{k-1})\right)\cap \mathbb{N}\right)\\
			& =  A_k \cup (g^{k+1}-A_k),
		\end{align*}
		because $\max(g^i-g^k+A_{k-1})=g^i-g^k+max(A_{k-1})<g^i-g^k+g^{k-1}<0$ for each $i\leq k-1$.
		Thus we have shown the equation \eqref{Ex_densityzero_not_disjoint_additive_complement_eq1}.
		
		The equation \eqref{Ex_densityzero_not_disjoint_additive_complement_eq1} gives us $|A_n|=2 |A_{n-1}|$, because $A_{n-1}$ and  $(g^n-A_{n-1})$ are disjoint for each $n\geq 2$. It follows that $|A_n|=2^n$ for each $n\in \mathbb{N}$.

		Since $g\geq 3$, we get that for each natural number $n$
		$$|A\cap[1,n]|\leq  |A_{\lfloor \log_g(n)\rfloor+1}|\leq  2^{1+\log_g(n)}\leq  2(3^{\frac{2\log_3(n)}{3}})\leq 2n^{\frac{2}{3}}. $$
		Therefore,
		$$d(A)=\lim_{n\rightarrow \infty}\dfrac{|A\cap[1,n]|}{n}\leq  \lim_{n\rightarrow \infty} \dfrac{2n^{\frac{2}{3}}}{n}=0. $$
	\end{proof}
	Note that the above corollary is also helpful in the proof of Theorem \ref{Thm-2}, which is given in the next section.
	\section{Proof of Theorem \ref{Thm-2} }\label{section4}
	As a consequence to Corollary \ref{not_additive_complement_density_zero}, we get the proof of Theorem \ref{Thm-2} in the case $\alpha=0$ and $\alpha=1$.
	
	If $\alpha $ is rational, we get the proof of Theorem \ref{Thm-2} using the following proposition, namely Proposition \ref{THM-4rational_proposition}.		\begin{proposition}\label{THM-4rational_proposition}
		Let $\alpha\in (0,1]_{\mathbb{Q}}$. Then there exists $A\subset \mathbb{N}$  such that $d(A)=\alpha$, and $(A+(\mathbb{N}\setminus A))$ does not intersect some infinite subset of $\mathbb{N}$.
	\end{proposition}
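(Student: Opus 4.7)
The plan is to realize $A$ as a finite union of residue classes modulo $q$, chosen symmetrically under negation, and to take the infinite set to be $q\mathbb{N}$. Write $\alpha = p/q$ in lowest terms. The case $\alpha = 1$ (so $p = q = 1$) is trivial: take $A = \mathbb{N}$, so $\mathbb{N} \setminus A = \varnothing$ and $A + (\mathbb{N} \setminus A) = \varnothing$ misses every infinite set. So I focus on $0 < p < q$.

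The heart of the argument is the following. Suppose I have a subset $R \subset \{0, 1, \ldots, q-1\}$ with $|R| = p$ and $R \equiv -R \pmod{q}$. I then set $A := \{n \in \mathbb{N} : n \bmod q \in R\}$, which has $d(A) = p/q = \alpha$ since it is a union of $p$ residue classes modulo $q$. I claim that no positive multiple of $q$ lies in $A + (\mathbb{N} \setminus A)$: if $a \in A$, $b \in \mathbb{N} \setminus A$ and $a + b \equiv 0 \pmod{q}$, then $b \equiv -a \pmod{q}$; since $a \bmod q \in R$ and $R = -R$, this forces $b \bmod q \in R$, contradicting $b \notin A$. So $S := q\mathbb{N}$ is an infinite subset of $\mathbb{N}$ disjoint from $A + (\mathbb{N} \setminus A)$, as required.

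The one step that needs justification is the existence of a symmetric $R$ of each prescribed cardinality $p \in \{1, \ldots, q-1\}$. Under the involution $x \mapsto -x$ on $\mathbb{Z}/q\mathbb{Z}$, every element is either a fixed point (just $0$ when $q$ is odd; both $0$ and $q/2$ when $q$ is even) or paired with a distinct negative. A symmetric subset is precisely a union of some fixed points together with some complete two-element orbits, so by choosing the right number of orbits together with zero, one, or (when $q$ is even) two of the fixed points, I can produce a symmetric subset of any cardinality in $\{0, 1, \ldots, q\}$; in particular one of size $p$ exists. This elementary parity/pairing observation is the only nontrivial input to the proof, and everything else reduces to a direct residue-class calculation.
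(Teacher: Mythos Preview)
Your proof is correct. Both your argument and the paper's take the same high-level route: realize $A$ as a union of $p$ residue classes modulo $q$ and exhibit a full residue class that $A+(\mathbb{N}\setminus A)$ misses. The difference is only in which classes are chosen. The paper simply takes the contiguous block $R=\{0,1,\ldots,p-1\}$ and observes that $\{0,\ldots,p-1\}+\{p,\ldots,q-1\}$ hits every residue except $p-1$, so the class $p-1+q\mathbb{N}_0$ is missed; no auxiliary existence lemma is needed. Your choice of a negation-symmetric $R$ makes the ``missed class'' claim conceptually transparent (if $a+b\equiv 0$ and $a\in R$ then $b\in -R=R$), at the cost of the extra orbit-counting step to produce a symmetric $R$ of each size. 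Either way the proposition follows in a couple of lines; the paper's version is marginally shorter, yours is marginally more structural.
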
 		
	\begin{proof}
		Let $\alpha=\frac{p}{q}$ such that  $p<q$ and $\{p,q\}\subset \mathbb{N}$. Let $A\subset\mathbb{N}$ such that
		$A:=\left( \bigcup_{i\in [0,p-1]} i+q\mathbb{N}_{0}\right)\cap \mathbb{N}.$
		It is easy to observe that $d(A)=\frac{p}{q}$ and
		$A+(\mathbb{N}\setminus A)=\mathbb{N}\setminus (p-1+q\mathbb{N}_0)$.
		This completes the proof of the proposition.	
	\end{proof}
	\noindent	The next proposition completes the proof of Theorem \ref{Thm-2}.
	\begin{proposition}\label{existence_non_additive_complement_density}
		Let $\alpha\in (0,1)_{\mathbb{R}}$. Then there exists $A\subset \mathbb{N}$   such that $d(A)=\alpha$, and $A+(\mathbb{N}\setminus A)$ does not intersect some infinite subset of $\mathbb{N}$. 	
	\end{proposition}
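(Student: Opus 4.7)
Since the rational case is covered by Proposition~\ref{THM-4rational_proposition}, the plan is to treat irrational $\alpha\in(0,1)$. The idea is to adapt the reflection-based construction in the proofs of Theorem~\ref{Thm-1} and Corollary~\ref{not_additive_complement_density_zero}, which produces a density-zero set $A$ closed under the maps $a\mapsto b_k-a$, and to inject extra elements at every scale in a controlled way so that the density rises from $0$ up to the prescribed irrational $\alpha$. The infinite set that $A+(\mathbb{N}\setminus A)$ will miss is $S=\{4^k:k\ge 1\}$.

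Concretely, fix a uniformly distributed reference set $A^{\ast}\subset\mathbb{N}$ of density $\alpha$, i.e.\ one satisfying $|A^{\ast}\cap[a,b]|=\alpha(b-a)+O(1)$ as $b-a\to\infty$ (for instance a Beatty set of slope $1/\alpha$). Put $A_1:=\varnothing$ and, for $k\ge 1$, define
\[
A_{k+1}:=A_k\cup(4^{k+1}-A_k)\cup T_{k+1}\cup(4^{k+1}-T_{k+1}),\qquad T_{k+1}:=A^{\ast}\cap(4^k,2\cdot 4^k),
\]
and set $A:=\bigcup_{k\ge 1}A_k$. The four ingredients $A_k$, $T_{k+1}$, $4^{k+1}-T_{k+1}$, $4^{k+1}-A_k$ live respectively in the pairwise disjoint intervals $[1,4^k]$, $(4^k,2\cdot 4^k)$, $(2\cdot 4^k,3\cdot 4^k)$, $[3\cdot 4^k,4^{k+1})$, which keeps the bookkeeping clean and gives the exact size recursion $|A_{k+1}|=2|A_k|+2|T_{k+1}|$.

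The missed-set property is immediate from the recursion: for every $k\ge 1$, whenever $a\in A$ and $0<a<4^k$ one has $4^k-a\in A$. Hence if $4^k=a+b$ with $a\in A$ and $b\in\mathbb{N}$, then $b=4^k-a\in A$, so $b\notin\mathbb{N}\setminus A$; in particular $S\cap(A+(\mathbb{N}\setminus A))=\varnothing$, and $S$ is infinite.

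For the density, the goal is to prove by induction on $k$ the uniform estimate
\[
\bigl||A_k\cap[1,m]|-\alpha m\bigr|\le C\,k\qquad\text{for all }m\in[1,4^k],
\]
where $C$ depends only on the uniform-distribution constant of $A^{\ast}$. In the inductive step one splits $[1,4^{k+1}]$ into the four disjoint regions above; in each region the count is either of the form $\alpha\cdot(\text{length})+O(1)$ (from the uniform distribution of $T_{k+1}$) or is obtained from the inductive hypothesis for $A_k$ applied to an initial or reflected sub-interval. The errors add to $E_{k+1}\le E_k+O(1)$, so $E_k=O(k)$; since $k/4^k\to 0$ and $n\ge 4^k$ for $n\in[4^k,4^{k+1}]$, this gives $|A\cap[1,n]|/n\to\alpha$ and hence $d(A)=\alpha$. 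The main technical subtlety will be the third region $[3\cdot 4^k,4^{k+1})$, whose count is $|A_k\cap[1,4^{k+1}-n]|$ and therefore really requires the full inductive uniform estimate on $A_k$, not merely the endpoint value $|A_k|$; this is also where the reflection symmetry of the construction is used in an essential way.
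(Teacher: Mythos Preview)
Your construction is correct and the missed-set argument is clean: the symmetry $A_k=4^k-A_k$ together with $A\cap[1,4^k]=A_k$ indeed forces $4^k\notin A+(\mathbb{N}\setminus A)$ for every $k$. The density argument also works in the end, but the quantitative claim you make is wrong and should be fixed. In the last region $[3\cdot 4^k,4^{k+1})$ the partial count is
\[
|A_{k+1}\cap[1,m]|=2|A_k|+2|T_{k+1}|-|A_k\cap[1,4^{k+1}-m-1]|,
\]
so the error there is $2e_k-e'+O(1)$, where $e_k$ is the endpoint error $|A_k|-\alpha\,4^k$ and $e'$ is a generic error bounded by $M_k:=\max_{m\le 4^k}\bigl||A_k\cap[1,m]|-\alpha m\bigr|$. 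Already the endpoint recursion $|A_{k+1}|=2|A_k|+2|T_{k+1}|$ gives $e_{k+1}=2e_k+O(1)$, hence $e_k$ grows like $2^k$, not linearly. Thus your asserted bound $M_k=O(k)$ cannot hold; what the induction actually yields is $M_{k+1}\le M_k+O(2^k)$, so $M_k=O(2^k)$. This is still enough, because for $n\in(4^k,4^{k+1}]$ one has $|A\cap[1,n]|=|A_{k+1}\cap[1,n]|$ and $M_{k+1}/n\le M_{k+1}/4^k=O((1/2)^k)\to 0$, giving $d(A)=\alpha$. So replace ``$E_{k+1}\le E_k+O(1)$, $E_k=O(k)$'' by the exponential bound and the argument closes. (Your choice of base $4$ is exactly what makes this work; base $2$ would leave no room for $T_{k+1}$, and any base $b>2$ beats the doubling of the endpoint error.)

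For comparison, the paper takes an entirely different route: it reduces to $\alpha\ge 1/2$ by the symmetry of $A+(\mathbb{N}\setminus A)$ under swapping $A$ and its complement, writes $\alpha=\sum_i a_i 2^{-i}$ in binary with $a_1=1$, and sets $A=\mathbb{M}(2,(a_i))$, a union of disjoint arithmetic progressions whose density is $\alpha$ immediately by Corollary~\ref{density_union_ap_corollary}. The infinite missed set is then $S=\{\sum_{j<l}a_j2^j:l\ge 2\}$, obtained by a direct residue computation of $A+(\mathbb{N}\setminus A)$ modulo powers of $2$. The trade-offs: the paper's $A$ is highly structured (UDAP) and the density needs no estimate at all, but the missed set $S$ depends on $\alpha$; your $A$ is not UDAP and needs a counting argument, but your $S=\{4^k\}$ is universal and the whole construction is a transparent extension of Corollary~\ref{not_additive_complement_density_zero}.
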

	\begin{proof}
		One can easily observe that $d(A)=\alpha$ if and only if $d(\mathbb{N}\setminus A)=1-\alpha$. So it is sufficient to prove the proposition for $\alpha\geq \frac{1}{2}$.

		Let $\alpha \geq \frac{1}{2}$. By Lemma \ref{thm_lem_1}, there exists a sequence of integers $(a_i)_{i=1}^{\infty}$ in $\{0,1\}$ with $a_1=1$ such that
		$$\alpha =\sum_{i=1}^{\infty}\frac{a_i}{2^i}.$$
		
		Let $A\subset \mathbb{N}$ such that $A=\mathbb{M}(2,(a_j)_{j=1}^{\infty})$.	By using Corollary \ref{density_union_ap_corollary}, we get that $d(A)=\alpha$. By Definition \ref{main_UDAP_set}, we can write the set $A$ and the set $\mathbb{N}\setminus A$ in the following form:
		$$ A =\bigcup_{i=1}^{\infty}(X_i\setminus\{0\}), ~\text{where}~ X_n=\bigcup_{i=0}^{a_n-1} \left(\sum_{j=0}^{n-2} a_{j+1}2^j+ i2^{n-1} +2^n\mathbb{N}_0\right) ~\forall n\in \mathbb{N}.$$
		$$\mathbb{N}\setminus A=\bigcup_{i=1}^{\infty} (Y_i\setminus\{0\}), ~\text{where}~ Y_n=\bigcup_{i=a_n+1}^{1}\left(\sum_{j=0}^{n-2} a_{j+1}2^j+ i2^{n-1} +2^n\mathbb{N}_0    \right) \forall ~n\in \mathbb{N}.$$

		Since $\alpha\in (0,1)_\mathbb{R}$, then the set $\{i\in \mathbb{N}: a_i=0\}$ is nonempty. So, By Well-ordering-principle\footnote{Every nonempty set of positive integers contains least element.}, $w:=\min\{i\in \mathbb{N}: a_i=0\}$ exists. This means $a_i=1$ for each $i\leq w$. Since $a_1=1$, we have $w>1$.

		
		Next,  we  compute the set  $A+(\mathbb{N}\setminus A)$. To get this,
		let $x\in A$ and $y\in \mathbb{N}\setminus A$.  
		Since $y\in \mathbb{N}\setminus A$, then there exists $m\in \mathbb{N}$ with $a_m\neq1$ such that $y\in Y_m$. Then  we have 
		\begin{equation}\label{existence_non_additive_complement_density_eq1}
			y\in \sum_{j=0}^{m-2} a_{j+1}2^j+ 2^{m-1} +2^m\mathbb{N}_0.
		\end{equation}
		Since $x\in A$, there exists $l\in \mathbb{N}$ with $a_l\neq 0$ such that
		$x\in X_l$. Then  we have 
		\begin{equation}\label{existence_non_additive_complement_density_eq2}
			x\in \sum_{j=0}^{l-2} a_{j+1}2^j +2^l\mathbb{N}_0.
		\end{equation}
		Using the expressions \eqref{existence_non_additive_complement_density_eq1}
		and \eqref{existence_non_additive_complement_density_eq2}, we obtain
		\begin{equation*}
			x+y\in \begin{cases}
				\displaystyle\sum_{j=1}^{l-1}a_{j}2^{j}+2^{l-1} +2^l\mathbb{N}_0 &\text{ if } l<m \\ 
				\displaystyle\sum_{j=1}^{m-1}a_{j}2^{j}+2^{m-1} +2^m\mathbb{N}_0 &\text{ if } l>m.\\
			\end{cases}
		\end{equation*}
		Thus,
		$$A+(\mathbb{N}\setminus A)\subset \bigcup_{l=1}^{\infty} T_l,$$
		where $T_l$ is defined as $$T_l:=\displaystyle\sum_{j=1}^{l-1}a_{j}2^{j}+2^{l-1} +2^l\mathbb{N}_0. $$
		
		Define
		$$S:=\left\{\sum_{j=1}^{l-1}a_j2^j:l\in \mathbb{N}\setminus\{1\}\right\}.$$
		To complete the proof, we need to show that  $S$ does not intersect the set $A+(\mathbb{N}\setminus A)$. For this, it is enough to show that
		$S$ and  $T_k$ are disjoint for every positive integer  $k$. The proof of this is given below.
		
		By the way of contradiction, if possible, we assume that $k$ is a positive integer such that $S\cap T_k\neq \varnothing$. So there exists positive integer $u$ such that  $$\sum_{j=1}^{u-1}a_j2^j\in S\cap T_k .$$ Then the definition of $T_k$ gives that
		\begin{equation}
			\sum_{j=1}^{u-1}a_j2^j=\left(\sum_{j=1}^{k-1}a_j2^j\right)+2^{k-1}+2^kw,\label{existence_non_additive_complement_density_eq3}
		\end{equation}
		for some nonngative integer  $w$. Now we consider the following two cases:
		\begin{enumerate}[label=(\alph*)]
			\item If $k\geq u$, the equation \eqref{existence_non_additive_complement_density_eq3} is not correct because right hand  side of equation \eqref{existence_non_additive_complement_density_eq3} is greater than  left hand  side of equation \eqref{existence_non_additive_complement_density_eq3}. Thus we get a contradiction for the case $k\geq u$.
			\item If $k< u$,  then equation \eqref{existence_non_additive_complement_density_eq3} gives that
			$2^{k-1}\equiv 0 \pmod{2^k}$, which is not valid equation for any integer $k$. So we also get contradiction for the case $k>u$. 
		\end{enumerate} 
		Thus $S\cap T_k=\varnothing$. This completes the proof. 
	\end{proof}

	\section{Proof of Theorem \ref{Thm-3} }\label{section5}
	\begin{lemma}\label{existence_given_density_set}
		Let $\alpha\in (0,1]_{\mathbb{R}}$,  $k\in \mathbb{N}$ and $B\subset [0,k-1]$ such that $\min(B)=0$ and $\max(B)=k-1$. Then there exists $A\subset \mathbb{N}$ such that $d(A+B)=\alpha$ and $A$ is the union of disjoint infinite arithmetic progression.
	\end{lemma}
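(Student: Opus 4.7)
I will construct $A$ level by level, guided by a base-$k$ expansion of $\alpha$ (Lemma~\ref{thm_lem_1}) and patterned on the set $\mathbb{M}(k,(a_j))$ of Definition~\ref{main_UDAP_set}, but tracking the density of $A+B$ rather than of $A$ itself. The trivial case $\alpha=1$ is dispatched by $A=\mathbb{N}$, which is a single infinite AP (hence UDAP), with $A+B\supseteq\mathbb{N}$ since $0\in B$, so $d(A+B)=1$.

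For $\alpha\in(0,1)$, expand $\alpha=\sum_{n\ge1}a_n/k^n$ with $a_n\in[0,k-1]$ (possibly after a preliminary carry procedure discussed below). Inductively, assume stages $1,\dots,n-1$ have produced pairwise disjoint sets $Y_1,\dots,Y_{n-1}$ (each $Y_m$ a finite disjoint union of infinite APs of common difference $k^m$) with the property that the translates $Y_m+B$ are pairwise disjoint and $(Y_m+B)\bmod k^m$ has $a_m$ elements. Let $C_{n-1}\subseteq\mathbb{Z}/k^{n-1}\mathbb{Z}$ record the residues already covered by $\bigcup_{m<n}(Y_m+B)$. At stage $n$, choose the residues of the APs forming $Y_n$ inside $[0,k^n-1]\setminus\pi^{-1}(C_{n-1})$, where $\pi:\mathbb{Z}/k^n\mathbb{Z}\to\mathbb{Z}/k^{n-1}\mathbb{Z}$ is the canonical projection, in such a way that $(Y_n+B)\bmod k^n$ has exactly $a_n$ elements inside $\mathbb{Z}/k^n\mathbb{Z}\setminus\pi^{-1}(C_{n-1})$. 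The key local fact that makes this feasible is that for every $n\ge1$, $B$ is contained in $[0,k^n-1]$ and its elements remain pairwise distinct modulo $k^n$, so a single AP $c+k^n\mathbb{N}_0$ contributes the translate $c+B$, i.e.\ precisely $|B|$ residues, to $(Y_n+B)\bmod k^n$.

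Once all stages are carried out, the sets $Y_n+B$ are pairwise disjoint by the residue-avoidance condition, each $Y_n+B$ is a disjoint union of APs of common difference $k^n$, and $\bigcup_{m>n}(Y_m+B)$ lies inside an AP of common difference $k^n$. Therefore Lemma~\ref{density_union_of_ap} applies and yields $d(A+B)=\sum_{n\ge1}a_n/k^n=\alpha$, while $A=\bigsqcup_n Y_n$ is UDAP because each $Y_n$ is a disjoint union of infinite APs and the $Y_n$'s are pairwise disjoint. The principal technical obstacle is the packing step: at low indices $n$ the bound $\lfloor k^n/|B|\rfloor$ on the number of disjoint $B$-translates in $\mathbb{Z}/k^n\mathbb{Z}$ can be smaller than the raw base-$k$ digit $a_n$, so one first has to pre-process the expansion by the standard carry $a_n\mapsto a_n-k$, $a_{n+1}\mapsto a_{n+1}+1$ (or, equivalently, work in a larger base such as $k^2$), thereby postponing all packing demands to levels high enough that $\lfloor k^n/|B|\rfloor$ exceeds every relevant $a_n$ and the required residue choices can be made inside $\mathbb{Z}/k^n\mathbb{Z}\setminus\pi^{-1}(C_{n-1})$.
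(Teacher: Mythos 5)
Your architecture (base expansion of $\alpha$, a level-by-level UDAP construction modelled on $\mathbb{M}(q,(a_j))$, and an appeal to Lemma~\ref{density_union_of_ap}) matches the paper's, but the step that carries all the difficulty is asserted rather than proved, and the obstacle you flag is not the real one. You claim that at stage $n$ you can choose the residues of the APs in $Y_n$ so that $(Y_n+B)\bmod k^n$ contributes \emph{exactly} $a_n$ new residues. Each AP $c+k^n\mathbb{N}_0$ contributes the whole block $c+B$, i.e.\ $|B|$ residues at once, so the achievable counts of newly covered residues are quantized: starting from an empty covered set, a nonempty union of translates of $B$ in $\mathbb{Z}/k^n\mathbb{Z}$ has at least $|B|$ elements, and hitting a target that is not attainable as a union of partially overlapping translates is simply impossible. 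Concretely, for $B=[0,k-1]$ and $\alpha=2/k^{n}$, the digit at level $n$ is $2$ but any nonempty $Y_n$ forces at least $k$ new residues; no carry of the form $a_n\mapsto a_n-k$, $a_{n+1}\mapsto a_{n+1}+1$, nor passing to base $k^2$, removes this granularity problem, because at whatever level you first place anything you must still realize an exact residue count that need not be a multiple of, or compatible with, the overlap patterns of $B$. By contrast, the bound $\lfloor k^n/|B|\rfloor$ you worry about is never binding: there is always ample room, since $a_n\le k-1\ll k^n$.

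What actually resolves this — and what your proposal never uses — are the hypotheses $\min(B)=0$ and $\max(B)=k-1$. The paper picks $r$ with $c=\lfloor q^r\alpha\rfloor\ge 2k-2$ and takes the $c-k+1$ \emph{consecutive} residues $0,1,\dots,c-k$ for the first block $X_r$; then $X_r+B$ covers exactly the interval of residues $[0,c-1]$ modulo $q^r$ (here $0\in B$ gives the left end, $k-1\in B$ gives the right end, and $c\ge 2k-2$ makes the two ranges meet), so the exact count $c$ is achieved despite the block granularity. The remaining density $\alpha-c/q^r<1/q^r$ is handled by translating the tail set $\mathbb{M}(q,(b_i))$ by $c-k+1$, so that adding $B$ produces only $c+\mathbb{M}(q,(b_i))$ as genuinely new elements (every $b<k-1$ lands in residues already covered by $X_r+B$). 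You would also need an argument that $\bigcup_{m>n}(Y_m+B)$ sits inside a single AP of difference $k^n$ before Lemma~\ref{density_union_of_ap} applies; this is automatic for the nested $\mathbb{M}$-sets but not for an unconstrained greedy choice of residues. As written, your proof has a genuine gap at the exact-count step.
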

	\begin{proof}
		Let $q$ be a positive integer such that $q\geq 2$. Then the sequence $(q^n\alpha)_{n=1}^{\infty}$ diverges to infinity. So, we can choose a  positive integer $r$ and nonnegative integer $c$ such that $c=\lfloor q^r\alpha\rfloor$ and
		\begin{equation}\label{existence_given_density_set_eq_3}
			2k-2\leq c.
		\end{equation}
		Since $\alpha \in (0,1]_\mathbb{R}$, by Lemma \ref{thm_lem_1}, there exists a sequence of integers $(a_i)_{i=1}^{\infty}$ with $0\leq a_n\leq q-1$ for each $n\in \mathbb{N}$ such that
		\begin{equation}\label{existence_given_density_set_eq_2}
			\alpha =\sum_{i=1}^{\infty}\frac{a_i}{q^i}= \frac{c}{q^r}+ \sum_{i=r+1}^{\infty}\frac{a_i}{q^i}.
		\end{equation}

		Define
		$A:=X_r \cup( c-k+1+\mathbb{M}(q,(b_i)_{i=1}^{\infty}))$, where
		$$X_r:=\bigcup_{i=0}^{c-k} ((i+q^r\mathbb{N}_0)\setminus\{0\})\text{ and } 
		b_i= \begin{cases}
			0 &\text{ if } i\leq r\\
			a_i &\text{ if } i>r.
		\end{cases}
		$$
		
		By the construction of the set $A$, one can easily observe that $A$ is UDAP set, because  the UDAP set $c-k+1 +\mathbb{M}(q,(b_i)_{i=1}^{\infty})$ and  the UDAP set $X_r$ are disjoint.

		Now we compute the quantity $d(A+B)$. Observe that
		\begin{align}
			A+B=&  (X_r+B)\bigcup \left(B+c-k+1+\mathbb{M}(q,(b_i)_{i=1}^{\infty})\right).\label{existence_given_density_set_eq_5}
		\end{align}
		It is given in the assumptions   that $B\subset [0,k-1]$ such that $\min(B)=\{0\}$ and $\max(B)=k-1$. This gives us
		\begin{equation*}
			X_r+B=X_r\bigcup\left(X_r+k-1\right)\bigcup \left(\bigcup_{i\in B\cap [1,k-2]}X_r+i\right).
		\end{equation*}
		Using the expression \eqref{existence_given_density_set_eq_3} in the above equation, we get that
		\begin{equation}
			X_r+B=\bigcup_{i=0}^{c-1}((i+q^r\mathbb{N}_0)\setminus \{0\})\label{existence_given_density_set_eq_8}
		\end{equation}
		By Definition \ref{main_UDAP_set}, $\mathbb{M}(q,(b_i)_{i=1}^{\infty})\subset q^r\mathbb{N}$. This gives us 	
		\begin{equation}
			([0,k-2]\cap B)+c-k+1+\mathbb{M}(q,(b_i)_{i=1}^{\infty})\subset \bigcup_{i=0}^{c-1}((i+q^r\mathbb{N}_0)\setminus\{0\})\label{existence_given_density_set_eq_9}
		\end{equation}
		Combining equations \eqref{existence_given_density_set_eq_5}, \eqref{existence_given_density_set_eq_8} and \eqref{existence_given_density_set_eq_9}, we get that
		\begin{equation}
			A+B
			=\left(\bigcup_{i=0}^{c-1}(i+q^r\mathbb{N})\setminus\{0\}\right)\bigcup  \left(c+\mathbb{M}(q,(b_i)_{i=1}^{\infty}\right).\label{existence_given_density_set_eq_6}
		\end{equation}
		By Corollary \ref{density_union_ap_corollary}, density of the set $\mathbb{M}(q,(b_i)_{i=1}^{\infty})$ exists and equal to $\sum_{i=r+1}^{\infty} \frac{a_i}{q^i}$. So, translation invariance property of density gives
		\begin{equation}
			d(c+\mathbb{M}(q,(b_i)_{i=1}^{\infty}))= \sum_{i=r+1}^{\infty} \dfrac{a_i}{q^i}.\label{existence_given_density_set_eq_7}
		\end{equation}
		Observe from Definition \ref{main_UDAP_set},   $\mathbb{M}(q,(b_i)_{i=1}^{\infty})\subset q^r\mathbb{N}$. So the set $\bigcup_{i=0}^{c-1}(i+q^r\mathbb{N}_0)$ and the set $(c+\mathbb{M}(q,(b_i)_{i=1}^{\infty}))$ are disjoint. Using this observation in equation \eqref{existence_given_density_set_eq_6}, we get that
		$$d(A+B)=d\left(\bigcup_{i=0}^{c-1}(i+q^r\mathbb{N}_0)\setminus\{0\}\right)+d(c+\mathbb{M}(q,(b_i)_{i=1}^{\infty}))=\dfrac{c}{q^r}+ \sum_{i=r+1}^{\infty} \dfrac{a_i}{q^i},$$
		by using equation \eqref{existence_given_density_set_eq_7}. Hence,
		by equation  \eqref{existence_given_density_set_eq_2}, we have
		$d(A+B)=\alpha$.
	\end{proof}
	\begin{proof}[Proof of Theorem \ref{Thm-3}\ref{THM-3Part1}]
		
		Let $B_1=B-\min(B)$. 	Let $k$ be a natural number such that  $B_1\subset [0,k-1]$ and $k-1=\max(B_1)$. Note that
		$\min(B_1)=0.$ Then, by Lemma \ref{existence_given_density_set},  there exists an infinite set of natural numbers $A$ such that $d(A+B_1)=\alpha$. Since density is translation invariant, we have $d(A+B)=\alpha.$
		
	\end{proof}

	\begin{proof}[Proof of Theorem \ref{Thm-3}\ref{THM-3Part2}]
		If $\alpha=1$, then the set $\mathbb{N}$ satisfies the conclusion of the theorem. So we assume  $\alpha \in (0,1)_\mathbb{R}$. Then there exists $p\in \mathbb{P}$ such that
		$\alpha<1-\frac{1}{p}$. So we can choose a nonnegative integer $k$ such that $0\leq k\leq p-2$ and
		$\frac{k}{p}\leq \alpha< \frac{k+1}{p}$.
		By applying Lemma \ref{thm_lem_1}, there exists a sequence $(a_i)_{i=1}^{\infty}$ in $[0,p-1]$ such that $a_1=k$ and
		$\alpha=\frac{k}{p}+\sum_{i=2}^{\infty}\frac{a_i}{p^i}$.
		By using Lemma \ref{density_union_of_ap}, density of the UDAP set  $\mathbb{M}(p,(a_i)_{i=1}^{\infty})$ is $\alpha$. By definition of the set $\mathbb{M}(p,(a_i)_{i=1}^{\infty})$, 
		$$\bigcup_{i=0}^{k-1}(i+p\mathbb{N}_0)\setminus\{0\}\subset\mathbb{M}(p,(a_i)_{i=1}^{\infty})\subset\bigcup_{i=0}^{k}i+p\mathbb{N}_0.$$
		
		Define $A:=(\mathbb{M}(p,(a_i)_{i=1}^{\infty})\setminus (p\mathbb{N}_0))\bigcup (k+1+p\mathbb{N}_0)$. Note that $A$ is UDAP set with density $\alpha$.
		
		Observe that $1\leq k+1\leq p-1$ and so $\gcd(k+1,p)=1$. By Dirichlet's Theorem (Lemma \ref{dirichlet}, we get  that  $k+1+p\mathbb{N}_0$ contain infinitely many primes. Therefore $A$ contains infinitely many primes.
	\end{proof}
	
	\begin{proof}[Proof of Theorem \ref{Thm-3} \ref{THM-3Part3}]
		Lemma \ref{totient_property1} provides us composite number $n\in \mathbb{N}\setminus \{1\}$ such that $\phi(n)<(1-\alpha)n$. So we can choose a nonnegative  integer $k$ such that $0\leq k\leq n-\phi(n)-1$ and
		$\frac{k}{n}\leq \alpha < \frac{k+1}{n}$.
		By applying Lemma \ref{thm_lem_1}, there exists a sequence $(a_i)_{i=2}^{\infty}$ in $[0,n-1]$  such that
		$\alpha=\frac{k}{n}+\sum_{i=2}^{\infty}\frac{a_i}{n^i}$.
		Let $T=\{a\in[1,n]: \gcd(a,n)\neq 1 \}$. Since $|T|=n-\phi(n)$, there exists a set $S$ such that $S\subset T$  and $|S|=k$. Choose 
		$$A:= (\mathbb{M}(n,(a_i)_{i=1}^{\infty}))\bigcup\left(\bigcup_{i\in S}(i+n\mathbb{N}_0)\setminus\{0\}\right), $$
		where $a_1=0$. Observe that  $A$ is UPDA set. Since $\mathbb{M}(n,(a_i)_{i=1}^{\infty})$ is subset of $n\mathbb{N}$, so it does not contain prime. We also observe that  $S+n\mathbb{N}$ does not contain prime because $\gcd(i,n)\neq 1$ for every $i\in S$. Thus  $A$ does not contain any prime.
		
		To complete the proof, we  show that density of $A$ is $\alpha$. By Corollary \ref{density_union_ap_corollary}, $$d(\mathbb{M}(n,(a_i)_{i=1}^{\infty}))=\sum_{i=2}^{\infty}\frac{a_i}{n^i}.$$ Therefore, since $\mathbb{M}(n,(a_i)_{i=1}^{\infty})$  and $S+n\mathbb{N}_0$ are disjoint sets, we get that
		$$d(A)=\frac{|S|}{n}+ \sum_{i=2}^{\infty}\dfrac{a_i}{n^i}=\frac{k}{n}+ \sum_{i=2}^{\infty}\dfrac{a_i}{n^i}=\alpha.$$
	\end{proof}

	\section{Proof of Theorem \ref{Thm-4} and Corollary \ref{THM-4Cor1} }\label{section6}
	\begin{proof}[Proof of Theorem \ref{Thm-4}]
		If $A=\mathbb{N}$, result holds for $\alpha=1$.  So we  assume  $\alpha <1$.  
		
		Suppose that there exists prime $p$ such that $g$ is primitive root modulo $p^2$ and $\gcd(a,p)=1$. By Lemma \ref{primitilive_root_property1}, $g$ is primitive root modulo $p^k$ for each positive integer $k$.  Let $m>\max\{2,\log_p(\frac{2}{\alpha})\}$. 
		Define the set $A_0$ and $B_0$ in the following way:  $$A_0:=p^m\mathbb{N}\cup (1+p^m\mathbb{N}) \quad \text{and} \quad B_0=\{ag^i: i\in [1,\phi(p^m)]\}$$
		Observe that $B_0\subset B$. This gives that	
		\begin{align}\label{THM-4eq1}
			A_0+B_0&=\left(\bigcup_{i=1}^{\phi(p^m)} ag^i+p^m\mathbb{N}\right)\bigcup\left(\bigcup_{i=1}^{\phi(p^m)} 1+ag^i+p^m\mathbb{N}\right)\subset A_0+B
		\end{align}
		Let $f: B\rightarrow [0,p^m-1]$ be homomorphism defined by $f(ag^i)=x$ where $x$ is unique nonnegative integer in $[0,p^m-1]$  such that $ag^i\equiv x\pmod{p^m}$. This implies that for every $i\in \mathbb{N}_0$,		
		$$ag^i+p^m\mathbb{N}=(f(ag^i)+p^m\mathbb{N}_0)\setminus \{f(ag^i),f(ag^i)+p^m,\cdots, ag^i\}.$$
		Using these values in equation \eqref{THM-4eq1}, we get
		\begin{equation}\label{THM-4eq2}
			A_0+B_0= \left((f(B_0)\cup (1+f(B_0)))+p^m\mathbb{N}_0\right)\setminus X,
		\end{equation}
		where $X$	is some finite set.	
		
		Since $g$ is primitive root modulo $p^m$, we get that
		$ f(B_0)=\{u\in [1,p^m-1]:p\nmid u\}$.
		This  means that $f(B_0)$ contains all those elements in $[0,p^m-1]$ that is not divisible by $p$, and $1+f(B_0)$ contains all multiples of $p$ in $[0,p^m-1]$. Thus $f(B_0)\cup (f(B_0)+1)=[0,p^m-1]$.
		Incrporating this in the equation \eqref{THM-4eq2}, we get that  $\mathbb{N}\setminus (A_0+B_0)$ is finite set. Since $B_0\subset B$, we obtain that
		\begin{equation}\label{THM-4eq4}
			|\mathbb{N}\setminus (A_0+B)|<\infty.
		\end{equation}
		
		Observe from the definition of quantity $m$ and the set $A_0$ that $d(A_0)=\frac{2}{p^m}$ and $\alpha >\frac{2}{p^m}$. Therefore to complete the proof of the theorem, we need to construct set $A$ with density $\alpha$ so that $A_0\subset A$ and $A$ is the  union of disjoint infinite arithmetic progressions.

		Since $\alpha>\frac{2}{p^m}$, we get from Lemma \ref{thm_lem_1} that there exists a sequence of integers $(a_n)_{n=1}^{\infty}$ with $0\leq a_n\leq p^m-1$ for each $n\in \mathbb{N}\setminus\{1\}$ and $2\leq a_1\leq p^m-1$ such that
		$\alpha =\sum_{n=1}^{\infty}\frac{a_n}{p^{mn}}.$
		
		Denote
		$	A:=\mathbb{M}(p^m,(a_i)_{i=1}^{\infty})$. By using Corollary \ref{density_union_ap_corollary}, we get that $d(A)=\alpha$. Since $a_1\geq 2$, we get that $A_0\subset A$ by Definition \ref{main_UDAP_set}.
		%
		Combining this with equation \eqref{THM-4eq4}, we get that		
		$\mathbb{N}\setminus (A+B)$ is finite.
	\end{proof}
	\begin{proof}[Proof of Corollary \ref{THM-4Cor1}]
		Corollary \ref{primitive_root_heath_boron_2} provides us an infinite set $P_1$, a subset of $\mathbb{P}$, such that  every element of the set $P_1$ is primitive root modulo $q$ for infinitely many prime $q$.
		Let 
		$$P_2:=\{g\in P_1: g \text{ is not primitive root  modulo } q^2 ~\forall~ q\in \mathbb{P} \}$$
		If $P_2$ is the empty set, then  for every $g\in P_1$ there exists a prime $q_g$ such that $g$ is primitive root modulo $q_g^2$. By applying Theorem \ref{Thm-4}, for every $\alpha\in (0,1]$ there exists additive complement $B_{g,\alpha}$ of the set $\{g^i:i\in \mathbb{N}\}$ such that $B_{g,\alpha}$ is union of disjoint infinite arithmetic  progressions and $d(B_{g,\alpha})=\alpha$. Hence the conclusion of Theorem \ref{Thm-4} holds for the geometric progression with initial term $g$ and common ratio $g$ for each $g\in P_1$.
		
		If $P_2$ is nonempty set, then there exists $h\in P_1$ such that $h$ is not primitive root modulo $q^2$ for every prime $q$. So there exists prime $r$ such that $h$ is  primitive root modulo $r$ and $h$ is not primitive root modulo $r^2$. By applying Lemma \ref{primitive_root_property2}, this implies that $h+tr$ is primitive root modulo $r^2$ for every positive integer $t$ with $t\not\equiv 0\pmod{r}$. Therefore conclusion of Theorem \ref{Thm-4} holds for every geometric progression with initial term $(h+tr)$ and common ratio $(h+tr)$ for every $t\in \mathbb{N}\setminus r\mathbb{N}$ by using Theorem \ref{Thm-4}.
		
		Therefore combination of the above two paragraph gives that there are infinitely many geometric progression for which conclusion of Theorem $\ref{Thm-4}$ holds.
	\end{proof}
	\section{Conclusion}
	In this article, we have focused on analysing some  special    additive complements of a set of natural numbers. These special additive complements consist of additive complement of a set disjoint to that set and additive complements which is  the union of disjoint infinite arithmetic progressions. In Theorem \ref{Thm-1} and Theorem \ref{Thm-2}, we have proved that there are infinitely many set $A$  of which additive complement always intersects that set. So this inspires some natural question, which is given below:
	\begin{question}
		Let $\alpha \in [0,1]$. Characterise all those  sets $A$, with  $d(A)=\alpha$, of which additive complement is subset of $\mathbb{N}\setminus A$?
	\end{question}
	In order to search additive complements which is UDAP set, we have proved that there are infinitely many geometric progressions for which we can get additive complements such that it is UDAP set. So one can ask existence of additive complements, which is also UDAP set, for arbitrary geometric progressions. This motivates the following question:
	\begin{question}
		Let $\alpha\in (0,1]$  and $B$ be a set of natural numbers such that its elements form a geometric progression. Does there exists an additive complements $A$ of the set $B$, with  $d(A)=\alpha$,  such that  $B$ is the union of disjoint infinite arithmetic progressions?
	\end{question}
	\noindent
	One can get affirmative solution of the above question if a set with zero density can be taken inside   UDAP sets with arbitrary density. This motivates to ask  the following question:
	\begin{question}
		Let $\alpha\in (0,1]$ and $A$ be a set of natural numbers with density $0$. Does there exist $B\subset \mathbb{N}$ such that $B\supset A$, $d(B)=\alpha$ and $B$ is the union of disjoint arithmetic progressions?
	\end{question}
	
	In context of UDAP sets, density of many  UDAP sets exist according to Lemma \ref{density_union_of_ap}. For general UDAP sets, we can ask the following question:
	\begin{question}
		Does density of an  UDAP set exist?
	\end{question}
	
	Theorem \ref{Thm-3} proves that for every finite  set $B$ and given $\alpha\in (0,1]$, there exists a set $A$ with $d(B+A)=\alpha$. In case of an infinite set $B$, existence of such $A$ is not known. So we ask the following question:
	\begin{question}
		Let $B$ be an infinite set of natural numbers and $\alpha \in [0,1]$. Does there exists a set $A$ such that $d(A+B)=\alpha$?
	\end{question}
	\noindent
	Proposition \ref{Prop-Lorentz} implies affirmative solution of the above question for the case $\alpha=1$.

	\section*{Acknowledgements}
	Authors would like to thank
	IIT Roorkee for academic and financial support.


\end{document}